\newtheorem{theorem}{Theorem}[section]
\newtheorem{proposition}[theorem]{Proposition}
\newtheorem{lemma}[theorem]{Lemma}
\newtheorem{corollary}[theorem]{Corollary}
\theoremstyle{definition}
\newtheorem{example}[theorem]{Example}
\newtheorem{definition}[theorem]{Definition}
\newtheorem{remark} [theorem] {Remark}
\begin{document}

\title{ The dual Radon - Nikodym property \\
for finitely generated Banach $C(K)$-modules}

\author{Arkady Kitover}

\address{Department of Mathematics, Community College of Philadelphia, 1700 Spring Garden St., Philadelphia, PA, USA}

\email{akitover@ccp.edu}

\author{Mehmet Orhon}

\address{Department of Mathematics and Statistics, University of New Hampshire, Durham, NH 03824, USA}

\email{mo@unh.edu}

\subjclass[2010]{Primary 46B20; Secondary 47B22, 46B42}

\date{\today}

\keywords{Radon - Nikodym property, Banach modules, Banach lattices}
\begin{abstract} We extend the well-known criterion of Lotz for the dual Radon-Nikodym property (RNP) of Banach lattices to finitely generated Banach $C(K)$-modules and Banach $C(K)$-modules of finite multiplicity. Namely, we prove that if $X$ is a Banach space from one of these classes then its Banach dual $X^\star$ has the RNP iff $X$ does not contain a closed subspace isomorphic to $\ell^1$.

\end{abstract}
\maketitle

\markboth{Arkady Kitover and Mehmet Orhon}{The dual Radon - Nikodym property}

\section{Introduction}

Let us start with reminding the reader about the following equivalences in the class of Banach lattices.

\begin{theorem} \label{tA} (Lozanovsky~\cite{Loz} - Lotz~\cite{Lot}, see also~\cite[Theorem 2.4.15, p.94]{MN}) Let $X$ be a Banach lattice. Then the following conditions are equivalent.
\begin{enumerate}
  \item $X$ is reflexive.
  \item $X$ does not contain a copy \footnote{If $X$ and $Y$ are Banach spaces we say that $X$ contains a copy of $Y$ if there is a closed subspace of $X$ linearly isomorphic to $Y$.} of either $c_0$ or $\ell^1$.
  \item $X$ does not contain a copy of either $c_0$ or $\ell^1$ as a sublattice. \footnote{If $X$ and $Y$ are Banach lattices we say that $X$ contains a copy of $Y$ as a sublattice if there is a closed sublattice of $X$ lattice isomorphic to $Y$.}
\end{enumerate}

\end{theorem}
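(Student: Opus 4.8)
The plan is to prove the three implications $(1)\Rightarrow(2)\Rightarrow(3)\Rightarrow(1)$, with essentially all the substance concentrated in $(3)\Rightarrow(1)$. The implication $(1)\Rightarrow(2)$ is immediate: a closed subspace of a reflexive space is reflexive, while neither $c_0$ nor $\ell^1$ is reflexive, so a reflexive $X$ can contain a copy of neither. The implication $(2)\Rightarrow(3)$ is purely formal, since a closed sublattice lattice-isomorphic to $c_0$ or $\ell^1$ is in particular a closed subspace isomorphic to it; hence the absence of subspace copies forces the absence of sublattice copies.

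For $(3)\Rightarrow(1)$ I would route everything through the notion of a KB-space, that is, a Banach lattice in which every norm-bounded increasing sequence converges in norm, equivalently a lattice that sits as a band in its bidual. The two facts I would invoke are: (i) a Banach lattice is a KB-space if and only if it contains no closed sublattice lattice-isomorphic to $c_0$ (the Lotz--Meyer-Nieberg characterization, see \cite{MN}); and (ii) the duality statement that $X$ contains a closed sublattice isomorphic to $\ell^1$ if and only if $X^\star$ contains a closed sublattice isomorphic to $c_0$. Granting these, the hypothesis of $(3)$ yields at once that $X$ is a KB-space (from the absence of a $c_0$-sublattice in $X$) and that $X^\star$ is a KB-space (from the absence of an $\ell^1$-sublattice in $X$, via (ii) followed by (i) applied to $X^\star$). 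The proof is then completed by the lattice reflexivity criterion: a Banach lattice is reflexive precisely when both it and its dual are KB-spaces. For this last step one uses that a KB-space is a band in its bidual, so that $X^{\star\star}=X\oplus X^{d}$ is a band decomposition (the bidual being Dedekind complete, every band is a projection band), and then uses that $X^\star$ is also a KB-space to exclude a nonzero disjoint complement $X^{d}$, forcing $X=X^{\star\star}$.

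The technical heart --- and the step I expect to be the main obstacle --- is the disjointification argument underlying facts (i) and (ii). To prove the nontrivial direction of (i), I would start from a norm-bounded increasing sequence $0\le x_1\le x_2\le\cdots$ that fails to be norm-Cauchy, pass to a subsequence so that the positive increments $y_n=x_{n+1}-x_n$ satisfy $\inf_n\|y_n\|>0$ while the partial sums $\sum_{k\le n}y_k=x_{n+1}-x_1$ remain bounded; a standard perturbation lemma then replaces the $y_n$ by a pairwise disjoint sequence spanning a closed sublattice lattice-isomorphic to $c_0$. The analogous extraction for (ii) produces, from a lattice copy of $\ell^1$ in $X$ built on a normalized positive disjoint sequence, a bounded positive disjoint sequence of functionals in $X^\star$ spanning a $c_0$-sublattice, and conversely. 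Keeping these disjointification and normalization estimates under control, together with the verification that the resulting spans are genuinely closed sublattices lattice-isomorphic to $c_0$ or $\ell^1$, is where the real work lies; once they are in hand, the reflexivity conclusion is a short structural argument.
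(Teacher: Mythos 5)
The paper offers no proof of this statement: it is quoted as a classical theorem of Lozanovsky and Lotz, with the reader referred to \cite[Theorem 2.4.15]{MN}, so there is nothing internal to compare your argument against. What you have written is, in outline, exactly the standard proof from that reference: $(1)\Rightarrow(2)$ and $(2)\Rightarrow(3)$ are the trivial implications, and $(3)\Rightarrow(1)$ is routed through KB-spaces using the three genuine ingredients you name --- the characterization ``$X$ is KB iff $c_0$ is not lattice embeddable in $X$'' (\cite[Theorem 2.4.12]{MN}), the duality ``$\ell^1$ lattice embeds in $X$ iff $c_0$ lattice embeds in $X^\star$'' (\cite[Proposition 2.3.12]{MN}, which this paper itself invokes in the proof of Lemma~\ref{l1}), and the criterion ``$X$ is reflexive iff $X$ and $X^\star$ are both KB.'' Your identification of the disjointification/perturbation lemmas as the technical heart is accurate. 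The one place where your sketch is thinner than it should be is the last structural step: having written $X^{\star\star}=X\oplus X^{d}$, the assertion that ``$X^\star$ is a KB-space'' excludes $X^{d}\neq\{0\}$ is not a one-line observation --- one must take $0<F\in X^{d}$, approximate it weak$^\star$ from $X_+$ via Goldstine, and use order continuity of the norm of $X^\star$ (or an equivalent device) to manufacture the contradiction; as stated, that deduction is asserted rather than argued. This is a gap of detail in a known argument, not a wrong turn, and the overall route is sound.
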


\begin{theorem} \label{tB}  (Lozanovsky~\cite{Loz1}, see also~\cite[Theorem 2.5.6, p.104 and Theorem 2.4.12, p. 92]{MN}) Let $X$ be a Banach lattice. Then the following conditions are equivalent.
\begin{enumerate}
  \item $X$ is weakly sequentially complete.
  \item $X$ does not contain a copy of $c_0$.
 \item $X$ does not contain a copy of $c_0$ as a sublattice.
\end{enumerate}

\end{theorem}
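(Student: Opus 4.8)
The plan is to prove the cycle by establishing the two routine implications $(1)\Rightarrow(2)\Rightarrow(3)$ and then the substantial implication $(3)\Rightarrow(1)$ by contraposition, routing it through the theory of KB-spaces. For $(1)\Rightarrow(2)$ I would use that weak sequential completeness passes to closed subspaces: weak convergence inside a closed subspace $Y\subseteq X$ coincides with weak convergence in $X$, since every functional on $Y$ extends to $X$ by Hahn--Banach. As $c_0$ is not weakly sequentially complete --- the partial sums $s_n=\sum_{k=1}^{n}e_k$ of its unit vector basis form a weakly Cauchy sequence with no weak limit in $c_0$ --- a Banach lattice containing a copy of $c_0$ cannot be weakly sequentially complete. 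The implication $(2)\Rightarrow(3)$ is immediate, since a sublattice copy of $c_0$ is in particular a subspace copy, so ruling out all subspace copies rules out the sublattice copies.

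For the key implication $(3)\Rightarrow(1)$ I would split it as $(3)\Rightarrow(X\text{ is a KB-space})\Rightarrow(1)$, where a \emph{KB-space} is a Banach lattice in which every norm-bounded increasing sequence of positive elements converges in norm. The second link, that every KB-space is weakly sequentially complete, is classical (see~\cite[Theorem 2.5.6]{MN}); the mechanism is that a norm-bounded increasing positive sequence $(u_n)$ is automatically weakly Cauchy --- for positive $\varphi\in X^{\star}$ the scalars $\varphi(u_n)$ increase and stay bounded, and $X^{\star}$ is spanned by its positive cone --- and in a KB-space this yields genuine limits from which weak limits of arbitrary weakly Cauchy sequences are built. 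I would prove the first link by contraposition: a Banach lattice that is \emph{not} a KB-space contains $c_0$ as a sublattice, which is exactly the negation of $(3)$.

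The heart of the matter, and the step I expect to be the main obstacle, is producing this sublattice copy of $c_0$ from the failure of the KB property. Starting from a norm-bounded increasing sequence $0\le u_1\le u_2\le\cdots$ that is not norm-convergent, I would pass to a subsequence so that the positive differences $v_n=u_{n+1}-u_n$ satisfy $\|v_n\|\ge\varepsilon$ for some $\varepsilon>0$, and note that their partial sums telescope, $\sum_{n=1}^{N}v_n=u_{N+1}-u_1\le u_{N+1}$, hence stay bounded by $M:=\sup_n\|u_n\|$. The difficulty is that the $v_n$ need not be disjoint, and the technical core is a disjointification argument replacing $(v_n)$ by a genuinely disjoint positive sequence $(d_n)$ with $\inf_n\|d_n\|>0$ and still norm-bounded partial sums. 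Once disjointness is secured we may normalize so that $\|d_n\|=1$, and then disjointness gives $\bigl|\sum_n a_n d_n\bigr|=\sum_n|a_n|\,d_n$, whence the two-sided estimate $\max_n|a_n|\le\bigl\|\sum_n a_n d_n\bigr\|\le C\max_n|a_n|$ with $C=M/\varepsilon$. This shows that the closed sublattice generated by $(d_n)$ is lattice-isomorphic to $c_0$, contradicting $(3)$ and completing the proof.
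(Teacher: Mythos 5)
The paper does not actually prove Theorem~\ref{tB}; it is quoted from the literature, and the citation \cite[Theorem 2.5.6 and Theorem 2.4.12]{MN} already encodes the intended route: weak sequential completeness is equivalent to the KB-property (2.5.6), and the KB-property is equivalent to the absence of a copy, or of a lattice copy, of $c_0$ (2.4.12). Your architecture is therefore exactly the classical one. Your implications $(1)\Rightarrow(2)$ and $(2)\Rightarrow(3)$ are correct and complete, and your reduction of $(3)\Rightarrow(1)$ to the two links ``KB $\Rightarrow$ WSC'' (which you may legitimately cite) and ``not KB $\Rightarrow$ $c_0$ embeds as a sublattice'' is the right decomposition.

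However, the one step you chose to prove rather than cite --- producing a pairwise disjoint positive sequence $(d_n)$ with $\inf_n\|d_n\|>0$ and bounded partial sums from the non-convergent increasing sequence --- is precisely the mathematical content of the theorem, and you assert it rather than prove it. The gap is genuine, not cosmetic: the telescoping differences $v_n=u_{n+1}-u_n$ can all live on overlapping ``supports,'' and the standard disjointification device $d_n=\bigl(v_n-\lambda\sum_{j\ne n}v_j\bigr)^{+}$ faces a real dilemma. For $\lambda<1$ the resulting elements need not be disjoint (check in $\mathds{R}$: both can be positive), while for $\lambda\ge 1$ disjointness holds but the lower norm bound is destroyed, since $\bigl\|\sum_{j\ne n}v_j\bigr\|$ can be as large as $M$, far exceeding $\varepsilon$. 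The classical proof circumvents this either by splitting into cases --- if the norm of $X$ is not order continuous one already gets an \emph{order-bounded} disjoint sequence with norms bounded below (whose closed span is then a lattice copy of $c_0$ by exactly the two-sided estimate you wrote), and if the norm is order continuous one argues with the supremum $u=\sup_N\sum_{n\le N}v_n$ taken in $X^{\star\star}$ --- or by an equivalent bidual/band argument. Without supplying one of these mechanisms, your proof of $(3)\Rightarrow(1)$ reduces to quoting the very statement being proved. Everything after the disjointification (normalization, the estimate $\max_n|a_n|\le\|\sum_n a_nd_n\|\le C\max_n|a_n|$, and the identification of the closed span as a sublattice) is correct.
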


Before we remind the reader the next result in this direction let us recall the following definition.

\begin{definition} \label{d7} A Banach space $X$ is said to have the Radon-Nikodym property (RNP) if for every finite measure space $(\Omega, \Sigma, \lambda)$ and for every bounded linear operator $T: L^1(\lambda) \rightarrow X$ there exists a strongly measurable $g \in L^\infty(\lambda, X)$ such that
$$ Tf = \int \limits_{\Omega} {fg d\lambda}, \; f \in L^1(\lambda),$$
where the integral is the Bochner integral.
\end{definition}

\begin{theorem} \label{tC}  (Lotz, see~\cite[Theorem 5.4.14, p.367]{MN}) Let $X$ be a Banach lattice. The following conditions are equivalent.
\begin{enumerate}
  \item The Banach dual $X^\star$ of $X$ has RNP.
  \item $X$ does not contain a copy of $\ell^1$.
\item $X^\star$ does not contain a copy of either $c_0$ or $L^1[0,1]$ as a sublattice.
\end{enumerate}
\end{theorem}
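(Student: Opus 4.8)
The plan is to prove the cycle of implications $(1)\Rightarrow(2)\Rightarrow(3)\Rightarrow(1)$, reserving the genuinely lattice-theoretic work for the last two steps. Throughout I would freely use that the RNP is inherited by closed subspaces, that neither $c_0$ nor $L^1[0,1]$ has the RNP, and the classical characterization (Stegall) that $X^\star$ has the RNP if and only if $X$ is an Asplund space, i.e.\ every separable subspace of $X$ has a separable dual.

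For $(1)\Rightarrow(2)$ I would argue by contraposition using the Asplund characterization. Since $\ell^1$ is separable while its dual $\ell^\infty$ is nonseparable, a subspace of $X$ isomorphic to $\ell^1$ would (Asplundness passing to subspaces) prevent $X$ from being Asplund, whence $X^\star$ would fail the RNP. This step is purely isomorphic and uses nothing about the lattice structure.

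The step $(2)\Rightarrow(3)$ is where duality enters, and I would prove the contrapositive: if $X^\star$ contains $c_0$ or $L^1[0,1]$ as a sublattice, then $X$ contains $\ell^1$. In the $c_0$ case one has a positive disjoint sequence $(\varphi_n)\subseteq X^\star$ equivalent to the $c_0$-basis, so that the partial sums $\sum_{n\le N}\varepsilon_n\varphi_n$ stay uniformly bounded over all choices of signs $\varepsilon_n$. Choosing normalized positive $x_n\in X$ that nearly norm $\varphi_n$, and exploiting the disjointness of the $\varphi_n$ to arrange $\varphi_m(x_n)\approx 0$ for $m\ne n$, a test against $\sum_{n\le N}\operatorname{sign}(a_n)\varphi_n$ yields $\|\sum a_n x_n\|\gtrsim\sum|a_n|$, so $(x_n)$ spans a copy of $\ell^1$. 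The $L^1[0,1]$ case is more delicate: a disjoint normalized positive sequence inside the $L^1$-sublattice is again isometrically $\ell^1$-equivalent, but now no single bounded functional simultaneously norms all of them, since the summation-of-signs device used in the $c_0$ case is unavailable. Here I would instead exploit the nonatomicity (infinite divisibility) of the $L^1$-sublattice, running a gliding-hump/stopping-time extraction to produce the desired normalized $\ell^1$-sequence in $X$.

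Finally, $(3)\Rightarrow(1)$ is the lattice characterization of the RNP applied to $Y:=X^\star$, and this is where I expect the main obstacle to lie. Since $X^\star$ contains no sublattice isomorphic to $c_0$, Theorem~\ref{tB} shows it is weakly sequentially complete, and hence its norm is order continuous (an order-bounded disjoint $c_0$-sequence would, in particular, be a $c_0$-sublattice). One then reduces, by the separable determinacy of the RNP, to a separable order continuous sublattice, represents it via a weak order unit as a K\"othe space sitting between $L^\infty(\mu)$ and $L^1(\mu)$ for a probability measure $\mu$, and argues that the absence of an $L^1[0,1]$-sublattice forces every bounded operator $T\colon L^1(\lambda)\to X^\star$ as in Definition~\ref{d7} to be Bochner representable: the associated $X^\star$-valued martingale converges in norm by order continuity once the $L^1$-obstruction to uniform integrability is excluded. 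This structural implication is the heart of the matter (cf.~\cite{MN}), and together with the easy directions above it closes the cycle.
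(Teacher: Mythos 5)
First, a point of comparison: the paper does not prove Theorem~\ref{tC} at all --- it is quoted as a known theorem of Lotz with a reference to~\cite[Theorem 5.4.14]{MN} --- so there is no internal proof to measure yours against, only the question of whether your blind attempt would stand on its own. Your cycle $(1)\Rightarrow(2)\Rightarrow(3)\Rightarrow(1)$ is a sound skeleton, and $(1)\Rightarrow(2)$ via the Stegall--Uhl/Asplund characterization (Theorem~\ref{tI}) is complete and correct: a copy of $\ell^1$ is a separable subspace with nonseparable dual. The $c_0$ half of $(2)\Rightarrow(3)$ is also essentially right, though to arrange $\varphi_m(x_n)\approx 0$ for \emph{all} $m\neq n$ simultaneously you must form $\psi_n=\sup_N\sum_{m\le N,\,m\neq n}\varphi_m$ in $X^\star$ (this supremum exists because a dual lattice is monotonically complete and the partial sums are uniformly bounded by $c_0$-equivalence) and apply the Riesz--Kantorovich formula to $\psi_n\wedge\varphi_n=0$; killing one $\varphi_m$ at a time does not control the infinite tail.

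The two places where the theorem actually lives are not proved. For the $L^1[0,1]$ half of $(2)\Rightarrow(3)$, ``exploit nonatomicity and run a gliding-hump/stopping-time extraction'' is not an argument, and I do not believe a direct extraction works: an $\ell^1$-sequence in $X$ is certified by an $\ell^\infty$-like family of functionals on $X$, and a lattice copy of $L^1[0,1]$ sitting in $X^\star$ does not hand you one. The standard proof goes the other way around: if $X$ contains no copy of $\ell^1$, Rosenthal's $\ell^1$-theorem gives that every bounded sequence in $X$ has a weakly Cauchy subsequence, so every order interval $[0,x]$ is weakly sequentially precompact, and the Lotz--Rosenthal theorem (Theorem~\ref{tH}, which this paper states and uses in Lemma~\ref{l1}) then says exactly that $X^\star$ contains no sublattice copy of $L^1[0,1]$; you should route the step through Theorem~\ref{tH} rather than attempt a bare-hands construction. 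Likewise $(3)\Rightarrow(1)$ --- that a dual Banach lattice with no sublattice copy of $c_0$ or $L^1[0,1]$ has the RNP --- is the main structural content of Lotz's theorem. Your opening moves (no $c_0$-sublattice gives a KB-space, hence order continuous norm, then a K\"othe representation over a weak unit) are the correct ones, but the assertion that ``the absence of an $L^1$-sublattice forces the associated martingale to converge'' is precisely the statement to be proved, and you supply no mechanism for it. As written, the proposal establishes the easy third of the theorem and defers the remaining two thirds to phrases that name the difficulty rather than resolve it.
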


The statements of Theorems~\ref{tA} -~\ref{tC} become false if instead of Banach lattices we consider arbitrary Banach spaces. For Theorems~\ref{tA} and~\ref{tB} a counterexample is provided by the famous James' space~\cite{Ja} while in  the case of Theorem~\ref{tC} we need to use another example of James in~\cite{Ja1} \footnote{The James' space from~\cite{Ja}, being quasi-reflexive, has the dual RNP.} where he constructed a separable Banach space $X$ not containing a copy of $\ell^1$ and such that $X^\star$ is not separable. It follows from a later result of Stegall~\cite{St} that the space $X$ from~\cite{Ja1} does not have the dual RNP.

But, if instead of the class of all Banach spaces we consider the much smaller classes of finitely generated Banach $C(K)$-modules or Banach $C(K)$-modules of finite multiplicity, which while not contained in the class of all Banach lattices can be considered as its nearest relatives, the analogues of Theorems~\ref{tA} -~\ref{tC} remain true.

We refer the reader for precise definitions of the notions used in the next two statements to the next section of the current paper.

\begin{theorem} \label{tD} (\cite[Theorems 1 and 2]{KO}) Let $X$ be a finitely generated Banach $C(K)$-module or a Banach $C(K)$-module of finite multiplicity. Then the following conditions are equivalent.
\begin{enumerate}
  \item $X$ is reflexive.
  \item $X$ does not contain a copy of either $c_0$ or $\ell^1$.
  \item Every cyclic subspace of $X$ does not contain a copy of either $c_0$ or of $\ell^1$.
\end{enumerate}

\end{theorem}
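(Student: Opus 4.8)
The implications $(1)\Rightarrow(2)$ and $(2)\Rightarrow(3)$ are the routine ones: a closed subspace of a reflexive Banach space is reflexive whereas neither $c_0$ nor $\ell^1$ is reflexive, which gives the first, and a cyclic subspace is in particular a closed subspace, which gives the second. The whole content of the theorem is therefore the implication $(3)\Rightarrow(1)$, which I would establish in contrapositive form: \emph{if $X$ is not reflexive, then some cyclic subspace of $X$ already contains a copy of $c_0$ or of $\ell^1$.} The model to keep in mind is the Banach lattice Theorem~\ref{tA}, whose hard direction is driven by disjoint sequences; the plan is to manufacture the analogue of a disjoint sequence inside a \emph{single} cyclic subspace, letting the $C(K)$-action play the role of the lattice operations.

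So assume $X$ is non-reflexive. By the Eberlein--\v{S}mulian theorem there is a bounded sequence $(y_k)$ in $X$ with no weakly convergent subsequence. The crucial step is a disjointification: passing to a subsequence and a small perturbation, I would produce an equivalent sequence $(z_k)$ whose closed supports $F_k=\operatorname{supp}(z_k)$ are pairwise disjoint, where $\operatorname{supp}(z)$ is the smallest closed subset of $K$ off which $z$ is annihilated by $C(K)$. This is exactly where the hypothesis of finite generation, respectively finite multiplicity, is indispensable: through the representation of $X$ as a space of sections of a bundle over $K$ with fibers of \emph{uniformly bounded} local rank, one can run a gliding-hump/measure-theoretic argument in the base $K$ that pushes the failure of weak compactness onto disjoint regions of $K$. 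For modules of infinite multiplicity the fibers can absorb this failure without any spreading over $K$, which is precisely how the James-type examples defeat the analogous statement for general Banach spaces.

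Once the supports $F_k$ are pairwise disjoint closed sets, the whole sequence collapses into one cyclic subspace. Choose scalars $\lambda_k>0$ with $\sum_k\lambda_k\|z_k\|<\infty$ and put $x:=\sum_k\lambda_k z_k\in X$. Using Urysohn's lemma to produce, for each $k$, functions in $C(K)$ equal to $\lambda_k^{-1}$ near $F_k$ and supported in shrinking neighborhoods of $F_k$, together with the absolute convergence of the series defining $x$, one checks that $z_k\in Z:=\overline{C(K)x}$ for every $k$; thus it suffices to find $c_0$ or $\ell^1$ inside $Z$. On the closed linear span $\Lambda$ of $(z_k)$ the norm inherited from $X$ is a lattice norm with respect to the disjointly supported system $(z_k)$: given scalars with $|a_k|\le|b_k|$, Tietze's theorem applied to a finite union of the disjoint closed sets $F_k$ provides $g\in C(K)$ with $\|g\|_\infty\le 1$ and $g z_k=(a_k/b_k)z_k$ for the relevant $k$, so that $\|\sum_k a_k z_k\|=\|g\sum_k b_k z_k\|\le\|\sum_k b_k z_k\|$. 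Hence $\Lambda$ is a Banach lattice in which $(z_k)$ is a disjoint basis; as $(z_k)$ has no weakly convergent subsequence, $\Lambda$ is non-reflexive, and Theorem~\ref{tA} applied to $\Lambda$ yields a copy of $c_0$ or of $\ell^1$ inside $\Lambda\subseteq Z$, completing the contrapositive.

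The main obstacle is the disjointification step, which is the genuine analogue of the disjoint-sequence extraction in the proof of Theorem~\ref{tA} but must be carried out without any ambient lattice structure; everything else is soft (Eberlein--\v{S}mulian, Urysohn and Tietze, and a reduction to Theorem~\ref{tA}). For a module of finite multiplicity that is not finitely generated, I would precede the argument by a localization and exhaustion over $K$ that reduces matters to the finitely generated situation on pieces, after which the same disjointification and collapse to a cyclic subspace apply; the uniform bound on the multiplicity is what keeps these pieces under control.
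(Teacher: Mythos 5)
Your reduction of everything to the implication $(3)\Rightarrow(1)$ is fine, and the final step (a disjointly supported sequence spans an unconditional basic sequence, so James/Theorem~\ref{tA} applies) is sound. But the proposal has a genuine gap at exactly the point you yourself flag as ``the main obstacle'': the disjointification. You assert that a bounded sequence with no weakly convergent subsequence can, after passing to a subsequence and perturbing, be replaced by an equivalent sequence whose $C(K)$-supports are pairwise disjoint closed subsets of $K$, and you justify this only by an appeal to an unproved ``representation of $X$ as a space of sections of a bundle over $K$ with fibers of uniformly bounded local rank'' and an unspecified ``gliding-hump/measure-theoretic argument.'' No such representation is established for finitely generated Banach $C(K)$-modules in this paper or in the sources it relies on, and the disjointification is not a routine perturbation argument: even in the concrete cyclic case $X=L^1[0,1]$ it amounts to the Kadec--Pe{\l}czy\'nski/Rosenthal subsequence-splitting theorem, and in the general module case nothing of the sort is on the table. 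Since this single step carries the entire content of the theorem, the proof is not complete. There is also a secondary flaw in the collapse to a cyclic subspace: infinitely many pairwise disjoint closed sets $F_k$ in a compact Hausdorff space need not admit pairwise ``separated'' neighborhoods (consider $\{1/k\}$ together with $\{0\}$ in $[0,1]$), so the Urysohn/Tietze multiplier construction that is supposed to recover each $z_k$ from $x=\sum_k\lambda_k z_k$ and to make $(z_k)$ $1$-unconditional requires an additional separation hypothesis you have not arranged.

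The argument the authors actually use (in \cite{KO}, and replicated in the proofs of Theorems~\ref{t1} and~\ref{t2} here) avoids disjointification entirely. For a cyclic module one invokes Proposition~\ref{p1} to represent it as a Banach lattice and quotes Theorem~\ref{tA} directly. For $r+1$ generators $x_0,\dots,x_r$ one sets $Y=X(x_1,\dots,x_r)$, observes that $X/Y$ is cyclic (hence a Banach lattice), transfers condition (2) or (3) to $Y$ and to $X/Y$ (the latter via annihilators and results of Hagler/Rosenthal type), and then assembles the conclusion from the fact that reflexivity, and non-containment of $c_0$ and of $\ell^1$, are three-space properties; modules of finite multiplicity are then handled by the Bade decomposition of Theorem~\ref{tJ}. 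If you want to salvage your approach, you would need to either prove the disjointification lemma in full (which I expect to be at least as hard as the theorem itself) or switch to this induction-plus-three-space scheme.
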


\begin{theorem} \label{tE} (\cite[Theorems 3.1 and 3.8]{KO1}) Let $X$ be a finitely generated Banach $C(K)$-module or a Banach $C(K)$-module of finite multiplicity. Then the following conditions are equivalent.
\begin{enumerate}
  \item $X$ is weakly sequentially complete.
  \item $X$ does not contain a copy of $c_0$
  \item Every cyclic subspace of $X$ does not contain a copy of $c_0$.
\end{enumerate}

\end{theorem}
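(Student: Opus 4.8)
The implications $(1)\Rightarrow(2)\Rightarrow(3)$ are free and carry no module content. Indeed, if $X$ is weakly sequentially complete it cannot contain a copy of $c_0$, since the summing basis of $c_0$ (the partial-sum vectors) is weakly Cauchy but has no weak limit; and $(2)\Rightarrow(3)$ holds because a cyclic subspace is in particular a closed subspace. Thus the whole statement reduces to $(3)\Rightarrow(1)$. The plan is to prove this in two stages: first that condition $(3)$ already forces every cyclic subspace of $X$ to be weakly sequentially complete (a lattice fact), and then that for modules of finite multiplicity weak sequential completeness of all cyclic subspaces forces weak sequential completeness of $X$ itself.

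For the first stage I would exploit the fact that a cyclic module is, up to isomorphism, a Banach lattice. Concretely, writing a cyclic subspace as $Z=\overline{C(K)z}$ and using the representation of Banach $C(K)$-modules as spaces of sections of a bundle over $K$ (the machinery underlying \cite{KO} and \cite{KO1}), the single generator $z$ forces each fiber of $Z$ to be at most one-dimensional, since the residue field is scalar; hence $Z$ is isomorphic to a Banach function space, i.e.\ to a Banach lattice. Theorem~\ref{tB} then applies to $Z$: it is weakly sequentially complete if and only if it contains no copy of $c_0$. As by hypothesis $(3)$ no cyclic subspace of $X$ contains $c_0$, every cyclic subspace of $X$ is weakly sequentially complete.

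For the second stage I would use the finiteness of the multiplicity. In both admissible classes $X$ is represented as sections of a bundle whose fibers have dimension bounded by a fixed $n$ (for a finitely generated module with $m$ generators one may take $n=m$, so finite generation is a special case). I would argue by contraposition: assuming $X$ is not weakly sequentially complete, pick a bounded weakly Cauchy sequence $(x_k)\subset X$ with no weak limit and, after passing to a subsequence and perturbing, attempt to realize a $c_0$-structure inside a single cyclic direction. The uniform fiber bound is what lets one disjointify and concentrate the ``mass'' of $(x_k)$ onto one cyclic summand, producing a copy of $c_0$ in some cyclic subspace and contradicting $(3)$. This parallels the way condition $(3)$ is used in Theorem~\ref{tD}.

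The hard part will be precisely this second stage: transferring a failure of weak sequential completeness from the whole module down to one cyclic subspace. For a single generator the reduction is automatic, but with several generators the obstruction may be spread across distinct fibers, and one must combine the finite-multiplicity bound with a careful support and gliding-hump argument, controlling how weak convergence in $X$ reflects fiberwise behaviour in the bundle representation. Making that representation quantitatively precise enough to run the disjointification is the technical heart of the argument, and is where the structure theory developed in \cite{KO} does the real work.
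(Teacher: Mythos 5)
Your easy implications are fine, and the first stage of your plan is essentially sound: by Proposition~\ref{p1} (Veksler, Hadwin--Orhon) a cyclic subspace $X(x)$ carries a canonical Banach lattice structure with quasi-interior point $x$, so Theorem~\ref{tB} converts hypothesis (3) into weak sequential completeness of every cyclic subspace. (Your justification via ``one-dimensional fibers'' is not the right mechanism and is not needed --- the lattice structure comes directly from the cone $cl\{fx: f\geq 0\}$ as in Proposition~\ref{p1}, with no bundle representation required --- but the conclusion you want is available.)

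The genuine gap is the second stage, which you yourself label the ``technical heart'' and then do not carry out. A bounded weakly Cauchy sequence with no weak limit does not readily ``disjointify into a single cyclic direction'': weak convergence in $X$ does not localize onto cyclic summands, and you give no mechanism by which the obstruction to weak convergence can be concentrated in one cyclic subspace; as stated, this step is a hope rather than an argument. The proof in \cite{KO1} (mirrored by the RNP arguments of Theorems~\ref{t1}--\ref{t3} in the present paper) takes a different and more tractable route: for a finitely generated module one inducts on the number of generators, passing to $Y=X(x_1,\ldots,x_r)$ and the cyclic quotient $X/Y$, showing each is weakly sequentially complete (the quotient because it is again representable as a Banach lattice to which Theorem~\ref{tB} applies, after transferring the ``no $c_0$'' hypothesis to it), and then invoking the fact that weak sequential completeness is a three-space property \cite{CG}; for modules of finite multiplicity one then uses Bade's decomposition (Theorem~\ref{tJ}) together with a norm-approximation lemma of the type of Lemma~\ref{l5} to glue the uniform-multiplicity pieces. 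Until you either execute your gliding-hump scheme in full detail or switch to the induction/three-space and Bade-decomposition argument, the implication $(3)\Rightarrow(1)$ remains unproved.
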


\begin{remark} \label{r1} Every cyclic subspace of a Banach $C(K)$-module can be endowed in the unique way with the structure of a Banach lattice compatible with its structure of $C(K)$-submodule (see~\cite{KO, KO1} for more details). Therefore we can state that conditions of Theorems~\ref{tD} and~\ref{tE} are equivalent to any cyclic subspace not containing either $c_0$ or $\ell^1$ as a sublattice in the case of Theorem~\ref{tD} and not containing $c_0$ as a sublattice in the case of Theorem~\ref{tE}.

\end{remark}

The goal of the present paper is to show that similar to Theorems~\ref{tD} and~\ref{tE} the result of Theorem~\ref{tC} can be extended to the classes of finitely generated $C(K)$-modules and $C(K)$-modules of finite multiplicity.

We will need the following important characterization of the dual RNP property in Banach spaces.

\begin{theorem} \label{tI} (Stegall - Uhl).Let $X$ be a Banach space. The following conditions are equivalent.

(1) $X^\star$ has RNP.

(2) For every separable subspace $Y$ of $X$ its conjugate $Y^\star$ is also separable.

\end{theorem}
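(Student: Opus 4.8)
The plan is to establish the two implications $(2)\Rightarrow(1)$ and $(1)\Rightarrow(2)$ separately, using three standard facts from the theory of the Radon--Nikodym property as black boxes: (a) the RNP is separably determined, i.e.\ a Banach space has RNP if and only if each of its separable subspaces has RNP; (b) a closed subspace of a space with RNP again has RNP; and (c) the classical fact that, for a \emph{separable} Banach space $Y$, the dual $Y^\star$ has RNP if and only if $Y^\star$ is itself separable. Fact (c) is the substantive ingredient: its easy half (separable dual $\Rightarrow$ RNP) is a theorem of Dunford--Pettis, whereas its hard half (RNP $\Rightarrow$ separability, for a separable predual) is where the real work lies.

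For $(2)\Rightarrow(1)$ I would argue as follows. By (a) it suffices to show that every separable subspace $S$ of $X^\star$ has RNP. Fix such an $S$ and a countable dense set $\{f_n\}\subseteq S$. For each $n$ choose a sequence $(x_{n,k})_k$ in the unit ball of $X$ with $f_n(x_{n,k})\to\|f_n\|$, and let $Y$ be the closed linear span in $X$ of the countable family $\{x_{n,k}\}$. Then $Y$ is separable and norming for each $f_n$, hence by density norming for all of $S$, so the restriction map $f\mapsto f|_Y$ carries $S$ isometrically into $Y^\star$. By hypothesis $Y^\star$ is separable, so by the easy half of (c) it has RNP, and then by (b) the isometric copy of $S$ sitting inside $Y^\star$ has RNP; hence $S$ has RNP. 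This yields $(1)$.

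For $(1)\Rightarrow(2)$ I would argue by contraposition: assuming some separable subspace $Y\subseteq X$ has non-separable dual, I would produce a witness that $X^\star$ fails RNP. By the hard half of (c), $Y^\star$ itself already fails RNP, so the task is to transport this failure upward along the canonical metric surjection (restriction) $Q\colon X^\star\to Y^\star$, for which $Y^\star=X^\star/Y^\perp$. Concretely, since $Y$ is separable the ball $(B_{Y^\star},w^\star)$ is compact and metrizable but not norm-separable, and this non-separability yields a bounded, $\delta$-separated dyadic tree in $Y^\star$ (equivalently, a bounded non-convergent martingale, a bush witnessing non-dentability). Using that $Q$ is a metric surjection, I would lift this tree node by node through $Q$ with uniformly controlled norms, obtaining a bounded $\delta'$-separated tree in $X^\star$; the existence of such a tree contradicts the RNP of $X^\star$.

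I expect the main obstacle to be precisely the hard half of (c) together with the lifting step: producing the separated martingale tree in $B_{Y^\star}$ out of mere non-separability (this is the fragmentability/dentability heart of the argument), and then lifting it norm-boundedly through $Q$ while preserving both the $\delta$-separation and the averaging (martingale) relations that obstruct the Radon--Nikodym property.
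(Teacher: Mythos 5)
The paper does not actually prove this theorem: Remark~\ref{r5} simply attributes $(1)\Rightarrow(2)$ to Stegall and $(2)\Rightarrow(1)$ to Uhl and cites their papers. Your $(2)\Rightarrow(1)$ argument is essentially Uhl's: the norming separable subspace $Y$ built from the $x_{n,k}$ does make the restriction map isometric on $S$, its image is closed in the separable dual $Y^\star$, and the three black boxes you invoke (separable determination, heredity, and Dunford--Pettis for separable duals) are standard; that half is sound.

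The $(1)\Rightarrow(2)$ direction, however, has a genuine gap at the lifting step, and it is not a repairable technicality. If $(y^\star_t)$ is a bounded $\delta$-separated dyadic martingale tree in $Y^\star=X^\star/Y^\perp$ and you lift node by node, the children of a lifted node $x^\star_t$ must have the form $x^\star_t\pm v$ with $Qv=(y^\star_{t0}-y^\star_{t1})/2$, so the best available bound is $\|x^\star_{ti}\|\le\|x^\star_t\|+(1+\varepsilon)\sup_s\|y^\star_s\|$: the norms grow linearly with the level and the lifted tree is unbounded, hence witnesses nothing. This failure is intrinsic: bounded separated martingale trees cannot in general be lifted boundedly through metric surjections, and RNP does not pass to quotients. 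A concrete counterexample to your lifting principle is the quotient map $\ell^1\to c_0$; the tree $x_t=\sum_{i\le|t|}(2t_i-1)e_i$ is a bounded $2$-separated dyadic tree in $c_0$, while $\ell^1$ has RNP and therefore contains no bounded separated tree at all. Thus ``transporting the failure of $Y^\star$ upward along $Q$'' is precisely the content of Stegall's theorem that cannot be black-boxed by the separable case (c): Stegall's proof does not first find a bad tree in $Y^\star$ and then lift it, but instead works from the start with the $w^\star$-compact set of norm-preserving Hahn--Banach extensions in $B_{X^\star}$ of a non-fragmented $w^\star$-compact subset of $B_{Y^\star}$, and shows that set is non-dentable in $X^\star$ using elements of $Y$ as slicing functionals. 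As written, your proposal proves Uhl's implication but only reduces Stegall's implication to itself.
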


\begin{remark} \label{r5} The implication (1) $\Rightarrow$ (2) was proved by Stegall (see~\cite[Theorem2]{St}) and the implication (2) $\Rightarrow$ (1) by Uhl (see~\cite[Corollary 3]{U})

\end{remark}

At the end of the introduction we want to emphasize that one of the reasons to study the dual RNP is the following important result.

\begin{theorem} \label{tF} (~\cite[Theorem 1, p.98]{DU}) Let $(\Omega, \Sigma, \mu )$ be a finite measure space, $1 \leq p < \infty$, and $X$ be a Banach space. Then $L^p(\mu,X)^\star = L^q(\mu,X^\star)$, where $1/p+1/q=1$, if and only if $X^\star$ has the Radon-Nikodym property with respect to $\mu$.

\end{theorem}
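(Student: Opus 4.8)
The plan is to prove the two implications separately, after first recording the unconditional isometric embedding that makes the statement meaningful. For any Banach space $X$ and $1 \le p < \infty$, the pairing $(Jg)(f) = \int_\Omega \langle f(\omega), g(\omega)\rangle \, d\mu(\omega)$ defines a linear map $J \colon L^q(\mu, X^\star) \to L^p(\mu, X)^\star$: H\"older's inequality shows $\|Jg\| \le \|g\|_q$, and a measurable selection argument choosing $f$ to nearly realize the norm of $g(\omega)$ pointwise gives the reverse inequality, so $J$ is an isometry. Thus the theorem reduces to showing that $J$ is surjective if and only if $X^\star$ has RNP with respect to $\mu$.

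For sufficiency I would assume $X^\star$ has RNP with respect to $\mu$, fix $\Phi \in L^p(\mu, X)^\star$, and define the $X^\star$-valued set function $F(E)(x) = \Phi(\chi_E x)$, checking that $F$ is countably additive and $\mu$-continuous. The key quantitative step is to bound, for every finite measurable partition $\pi$, the conditional-expectation approximation $g_\pi = \sum_{E \in \pi} \mu(E)^{-1} F(E)\,\chi_E$: testing $\Phi$ against functions of the form $\sum_E a_E \chi_E x_E$ with $\|x_E\| \le 1$ and using the converse H\"older inequality yields $\|g_\pi\|_q \le \|\Phi\|$. Since $\mu$ is finite, one further application of H\"older converts this $L^q$-bound into $|F|(\Omega) \le \|\Phi\|\,\mu(\Omega)^{1/p} < \infty$, so $F$ has bounded variation; RNP with respect to $\mu$ then produces $g \in L^1(\mu, X^\star)$ with $F(E) = \int_E g\, d\mu$. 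Because the $g_\pi$ are exactly the conditional expectations of $g$, they converge to $g$ a.e.\ along a refining sequence, and lower semicontinuity of the $L^q$-norm (Fatou) gives $g \in L^q(\mu, X^\star)$ with $\|g\|_q \le \|\Phi\|$. Finally $Jg = \Phi$ on simple functions, hence on all of $L^p(\mu, X)$ by density since $p < \infty$.

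For necessity I would assume $J$ is onto and take a countably additive, $\mu$-continuous $G \colon \Sigma \to X^\star$ of bounded variation, seeking a Bochner density. The difficulty is that bounded variation alone does not make the associated functional bounded on $L^p$, so I would first reduce to measures with bounded scalar density: writing $d|G| = \phi\, d\mu$ with $\phi \in L^1(\mu)$ and partitioning $\Omega$ into the sets $A_n = \{\,n-1 \le \phi < n\,\}$, on each $A_n$ one has $|G| \le n\mu$. On $A_n$ the formula $\Phi_n(\sum \chi_{E_i} x_i) = \sum G(E_i)(x_i)$ then satisfies $|\Phi_n(f)| \le n\,\mu(A_n)^{1/q}\|f\|_p$ by H\"older, so it defines an element of $L^p(\mu|_{A_n}, X)^\star$; surjectivity of $J$ (applied after extending by zero off $A_n$) yields $g_n \in L^q(\mu|_{A_n}, X^\star)$ with $G(E) = \int_E g_n\, d\mu$ for $E \subseteq A_n$. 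Patching $g = \sum_n g_n \chi_{A_n}$ and using $\int_{A_n}\|g_n\|\,d\mu = |G|(A_n)$ gives $\int_\Omega \|g\|\, d\mu = |G|(\Omega) < \infty$, so $g \in L^1(\mu, X^\star)$ represents $G$ and $X^\star$ has RNP with respect to $\mu$.

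The step I expect to be the main obstacle is organizing the $p > 1$ case cleanly: establishing the a priori martingale bound $\|g_\pi\|_q \le \|\Phi\|$ in the sufficiency direction (and extracting from it both bounded variation of $F$ and the membership $g \in L^q$), together with the reduction to bounded scalar densities in the necessity direction. By contrast, the endpoint case $p = 1$, $q = \infty$ is routine, since there the estimate $\|F(E)\| \le \|\Phi\|\,\mu(E)$ already gives bounded variation and $\|g\|_\infty \le \|\Phi\|$ directly.
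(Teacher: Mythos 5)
The paper does not prove this statement: Theorem~\ref{tF} is quoted without proof from Diestel--Uhl \cite[Theorem 1, p.~98]{DU}, so there is no internal argument to compare against. Your proposal is essentially the standard proof from that reference and is correct: the isometric embedding $J$, the a priori bound $\|g_\pi\|_q\le\|\Phi\|$ on conditional-expectation approximants combined with RNP and Fatou for sufficiency, and the reduction to the sets $A_n=\{n-1\le\phi<n\}$ where $|G|\le n\mu$ for necessity are exactly the ingredients used there. The only point worth tightening is the a.e.\ convergence $g_{\pi_n}\to g$: since $g$ is only Bochner integrable, the refining sequence of partitions must be chosen adapted to the (essentially separably valued) $g$ so that the martingale converges to $g$ itself and not merely to a conditional expectation, but this is a routine adjustment rather than a gap.
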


\begin{remark} \label{r2} (1) We remind the reader that $L^p(\mu, X)$, $1 \leq p < \infty$, is the space of (classes) of all Bochner integrable functions on $\Omega$ endowed with the norm  $\|f\| = \big{(} \int \limits_\Omega \|f(\omega)\|_X^p d\mu \big{)}^{1/p}$.

(2) If $X^\star$ does not have the RNP the description of $L^p(\mu,X)^\star$ becomes rather involved. For a detailed discussion of it and for the background of Theorem~\ref{tF} we refer the reader to~\cite{DU}.
\end{remark}

\section{Preliminaries}

\bigskip All the linear spaces will be considered either over the field of
real numbers $\mathds{R}$ or the field of complex numbers $\mathds{C}$. If $X$
is a Banach space we will denote its Banach dual by $X^{\star}$.

Let us recall some definitions.

\begin{definition}
\label{d1} Let $K$ be a compact Hausdorff space and $X$ be a Banach space. We
say that $X$ is a Banach $C(K)$-module if there is a continuous unital algebra
homomorphism $m$ of $C(K)$ into the algebra $L(X)$ of all bounded linear
operators on $X$.
\end{definition}

\begin{remark}
\label{r3} Because $\ker{m}$ is a closed ideal in $C(K)$ by considering, if
needed, $C(\tilde{K}) = C(K)/\ker{m}$ we can and will assume without loss of
generality that $m$ is a contractive homomorphism (see~\cite{KO}) and $\ker{m}
= 0$. Then (see~\cite[Lemma 2 (2)]{HO}) $m$ is an isometry. Moreover, when it
does not cause any ambiguity we will identify $f    \in C(K)$ and $m(f)    \in L(X)$.
\end{remark}

\begin{definition}
\label{d2} Let $X$ be a Banach $C(K)$-module and $x    \in X$. We introduce the
\textit{cyclic} subspace $X(x)$ of $X$ as $X(x) = cl\{fx : f    \in C(K)\}$.
\end{definition}

The following proposition was proved in~\cite{Ve} (see also~\cite{Ra}) in the case
when the compact space $K$ is extremally disconnected and announced for an
arbitrary compact Hausdorff space $K$ in~\cite{Ka}. It follows as a special
case from~\cite[Lemma 2 (2)]{HO}.

\begin{proposition}
\label{p1} Let $X$ be a Banach $C(K)$-module, $x    \in X$, and $X(x)$ be the
corresponding cyclic subspace. Then, endowed with the cone $X(x)_{+} = cl \{fx
: f    \in C(K), f \geq0\}$ and the norm inherited from $X$, $X(x)$ is a Banach
lattice with positive quasi-interior point $x$.
\end{proposition}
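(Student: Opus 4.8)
The plan is to transfer the lattice structure of a suitable quotient of $C(K)$ onto $X(x)$ through the cyclic map and then to pass to a completion. First I would consider the bounded linear map $\phi : C(K) \to X(x)$ given by $\phi(f) = fx = m(f)x$, which by construction has dense range. Its kernel $J = \{f \in C(K) : fx = 0\}$ is a closed ideal of $C(K)$ (if $fx = 0$ then $(gf)x = g(fx) = 0$ for every $g \in C(K)$), hence $J = \{f : f|_F = 0\}$ for the closed hull $F \subseteq K$, and the restriction map identifies $C(K)/J$ isometrically with $C(F)$. Thus $\phi$ factors as $\phi = \tilde\phi \circ q$, where $q : C(K) \to C(F)$ is the restriction quotient and $\tilde\phi : C(F) \to X(x)$ is injective, bounded, and of dense range.

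The decisive step is the monotonicity estimate: for $f, g \in C(K)$ with $|f| \le |g|$ pointwise one has $\|fx\| \le \|gx\|$. To see it, for $\epsilon > 0$ set $h_\epsilon = \bar g f/(|g|^2 + \epsilon) \in C(K)$; then $\|h_\epsilon\|_\infty \le 1$, while $|h_\epsilon g - f| = |f|\,\epsilon/(|g|^2 + \epsilon) \le |g|\,\epsilon/(|g|^2 + \epsilon)$, whose supremum over $|g| \ge 0$ tends to $0$, so $h_\epsilon g \to f$ uniformly. Since $m$ is contractive (Remark~\ref{r3}), $\|h_\epsilon g x\| \le \|h_\epsilon\|_\infty \|gx\| \le \|gx\|$, and letting $\epsilon \to 0$ yields the claim. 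In particular $\|fx\|$ depends only on $|f|$, so that $N(h) := \|\tilde\phi(h)\|$ is well defined and monotone on $C(F)_+$; that is, $N$ is a lattice norm on the vector lattice $C(F)$, and $\tilde\phi$ is an isometry of $(C(F), N)$ onto a dense subspace of $X(x)$.

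I would then identify $X(x)$ with the completion of $(C(F), N)$. Because $N$ is a lattice norm the lattice operations are $N$-nonexpansive (from $|h_1^+ - h_2^+| \le |h_1 - h_2|$ and monotonicity one gets $N(h_1^+ - h_2^+) \le N(h_1 - h_2)$, and similarly for $\vee$, $\wedge$, $|\cdot|$), hence they extend continuously to the completion, which is therefore a Banach lattice; its positive cone is the $N$-closure of $C(F)_+$, which $\tilde\phi$ carries precisely onto $X(x)_+ = \mathrm{cl}\{fx : f \ge 0\}$. This shows that $X(x)$ with the cone $X(x)_+$ and the inherited norm is a Banach lattice. Finally, $x = \tilde\phi(\mathbf{1})$ is a positive quasi-interior point: for $f \in C(F)$ with $|f| \le n$ both $(n \pm f)x$ lie in $X(x)_+$, so $fx \in [-nx, nx]$, and since such elements are dense in $X(x)$ the principal ideal generated by $x$ is dense. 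The complex case follows by complexification.

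I expect the main obstacle to be the monotonicity estimate together with the verification that the cone of the abstract completion coincides with the concretely prescribed cone $X(x)_+$; once monotonicity is established, the completion argument and the identification of $x$ as a quasi-interior point are routine.
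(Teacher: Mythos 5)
The paper itself offers no proof of Proposition~\ref{p1}: it is quoted from the literature (Veksler, Rall, Kaijser) and obtained as a special case of \cite[Lemma 2 (2)]{HO}, so any complete argument you supply is by default a different route. Your route --- factor the cyclic map $\phi(f)=fx$ through $C(F)\cong C(K)/\ker\phi$, prove the domination estimate $|f|\le|g|\Rightarrow\|fx\|\le\|gx\|$ via the multipliers $h_\epsilon=\bar g f/(|g|^2+\epsilon)$, and then realize $X(x)$ as the completion of the normed Riesz space $(C(F),N)$ --- is essentially the classical argument, and it is correct; the estimate $\|h_\epsilon\|_\infty\le 1$, $\|h_\epsilon g-f\|_\infty\le\sqrt{\epsilon}/2$ is exactly the right device and does all the work. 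Two details deserve a sentence each before the proof is airtight. First, monotonicity of $N$ must be established on $C(F)$, not merely for pairs dominated on all of $K$: given $h_1,h_2\in C(F)$ with $|h_1|\le|h_2|$ and an extension $f_2$ of $h_2$, you need an extension $f_1$ of $h_1$ with $|f_1|\le|f_2|$ everywhere on $K$ (for instance $f_1=\mathrm{sgn}(g)\min(|g|,|f_2|)$ for an arbitrary extension $g$ of $h_1$, which is continuous because it is dominated by $|g|$) before your estimate applies; as written you pass from ``dominated in $C(K)$'' to ``lattice norm on $C(F)$'' without comment. Second, the statement that the completion of a normed Riesz space is a Banach lattice whose cone is the closure of the original cone is a genuine (if standard) theorem being invoked, not just continuity of the extended operations, and in the complex case the quasi-interior-point computation $(n\pm f)x\ge 0$ only parses after splitting $f$ into real and imaginary parts. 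Neither point is a gap in substance. What your proof buys over the paper's citation is self-containedness for the commutative $C(K)$ setting; the cited source \cite{HO} proves the result in a more general (noncommutative) framework.
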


Our next proposition follows from Theorem 1 (3) in~\cite{Or}

\begin{proposition}
\label{p2} The center $Z(X(x))$ of the Banach lattice $X(x)$ is isometrically
isomorphic to the weak operator closure of $m(C(K))$ in $L(X(x))$.
\end{proposition}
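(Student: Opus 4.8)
The plan is to prove the equality of sets $Z(X(x)) = \overline{m(C(K))}^{\,\mathrm{WOT}}$ (closure in the weak operator topology of $L(X(x))$) and then to check that the operator norm on this common set agrees with the intrinsic $M$-norm of the center, which yields the isometric isomorphism. I write $e = x$ for the quasi-interior point supplied by Proposition~\ref{p1}, and I freely identify $f \in C(K)$ with $m(f)$ restricted to the invariant cyclic subspace $X(x)$.

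First I would verify the easy inclusion $m(C(K)) \subseteq Z(X(x))$. If $f \in C(K)$ with $0 \le f$, then $m(f)$ carries $X(x)_{+} = \mathrm{cl}\{g x : g \ge 0\}$ into itself, and since $\|f\|_\infty - f \ge 0$ in $C(K)$ the operator $\|f\|_\infty I - m(f)$ is positive as well; hence $0 \le m(f) \le \|f\|_\infty I$. Splitting a general $f$ into its real/imaginary and positive/negative parts shows that every $m(f)$ is dominated by a multiple of $I$ and therefore lies in the center. Consequently $m(C(K)) \subseteq Z(X(x))$, and because the center is commutative every $T \in Z(X(x))$ commutes with each $m(g)$.

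The main work is the reverse density statement: every $T \in Z(X(x))$ is a WOT-limit of a net from $m(C(K))$. Fix $T$ with $-\lambda I \le T \le \lambda I$. Since $e$ is a quasi-interior point, the order structure allows me to approximate $T e$ in norm by elements $f_\alpha e$ with $f_\alpha \in C(K)$ and $-\lambda \le f_\alpha \le \lambda$ (such $f_\alpha$ are produced from the density of $\{g e : g \ge 0\}$ in $X(x)_{+}$ together with lattice truncations that keep the approximants inside the order interval $[-\lambda e, \lambda e]$; this is precisely where the quasi-interior property and the $M$-structure of the center enter). Granting this, the commutation identities $m(g)(f_\alpha e) = m(f_\alpha)(g e)$ and $T(g e) = m(g)(T e)$ give, for every $g \in C(K)$,
\[
\| m(f_\alpha)(g e) - T(g e) \| = \big\| m(g)\big( f_\alpha e - T e\big) \big\| \le \|g\|_\infty\, \| f_\alpha e - T e \| \longrightarrow 0 .
\]
Thus $m(f_\alpha) \to T$ strongly on the dense subspace $\{g e : g \in C(K)\}$; because $\|m(f_\alpha)\| = \|f_\alpha\|_\infty \le \lambda$ is uniformly bounded, the convergence extends to all of $X(x)$, so $m(f_\alpha) \to T$ in the strong, and a fortiori the weak, operator topology.

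For the opposite inclusion I would use that the approximating nets remain inside the fixed order interval $[-\lambda I, \lambda I]$ and that this interval is WOT-closed. Indeed, if $-\lambda I \le m(f_\alpha) \le \lambda I$ and $m(f_\alpha) \to S$ in WOT, then for each $y \in X(x)_{+}$ the vectors $(\lambda I \mp m(f_\alpha)) y$ lie in the norm-closed convex cone $X(x)_{+}$, which is therefore weakly closed, so $(\lambda I \mp S) y \in X(x)_{+}$ and $-\lambda I \le S \le \lambda I$; hence $S \in Z(X(x))$. Combining the two inclusions yields $Z(X(x)) = \overline{m(C(K))}^{\,\mathrm{WOT}}$. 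Finally, since $m$ is an isometry (Remark~\ref{r3}) and the operator norm of a central element coincides with its intrinsic $M$-norm, the identity map is an isometric isomorphism between the center and this WOT-closure. I expect the delicate point to be the norm-controlled approximation of $T e$ by $f_\alpha e$ within the order interval; the remaining steps reduce to the commutation identities and the weak closedness of the positive cone.
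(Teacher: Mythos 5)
The paper offers no in-text proof of this proposition: it is quoted as a special case of Theorem 1(3) in \cite{Or}, so your attempt is being measured against that reference rather than against an argument in the paper. Your architecture is the natural one --- show $m(C(K))\subseteq Z(X(x))$, show each central $T$ is a limit of a uniformly bounded net $m(f_\alpha)$ by approximating its action on the quasi-interior point, show the closure stays inside the center, then match the norms --- and the easy inclusion, the commutation identities $m(f_\alpha)(gx)=m(g)(f_\alpha x)$ and $T(gx)=m(g)(Tx)$, and the extension of convergence from the dense set $\{gx\}$ by uniform boundedness are all sound. But there are two genuine gaps.

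First, the step you flag as ``delicate'' is not merely delicate; it is the substance of the theorem. To approximate $Tx$ in norm by $f_\alpha x$ with $\|f_\alpha\|_\infty\le\lambda$ you truncate unrestricted approximants $g_\alpha$, and for truncation to preserve convergence you need $\bigl((g_\alpha\vee(-\lambda\mathbf{1}))\wedge(\lambda\mathbf{1})\bigr)x$ to coincide with the lattice truncation of $g_\alpha x$ computed inside $X(x)$, i.e.\ you need $g\mapsto gx$ to be a lattice homomorphism of $C(K)$ onto a dense sublattice of $X(x)$. Proposition~\ref{p1} only hands you the cone and the quasi-interior point; the lattice-homomorphism property is precisely the representation-theoretic content supplied by \cite{Ve}, \cite{Ka}, \cite{HO}, \cite{Or}, so without proving or citing it your argument is circular at its key point. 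Second, the reverse inclusion is incomplete: you show that a WOT-limit of a net confined to a fixed interval $[-\lambda I,\lambda I]$ stays in that interval (correct, since $X(x)_+$ is weakly closed), but a general element $S$ of the WOT-closure of $m(C(K))$ is the limit of a net with no a priori bound on $\|f_\alpha\|_\infty$ --- WOT-convergent nets, unlike sequences, need not be norm-bounded. All you can extract from such a limit is that $S$ commutes with every $m(g)$, hence $S(gx)=g(Sx)$; to get from there to $S\in Z(X(x))$ you need the nontrivial fact that the commutant of $m(C(K))$ on a cyclic space equals the ideal center (again the content of \cite{HO} and \cite{Or}), or some Kaplansky-type density argument. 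As written, your proof establishes $Z(X(x))\subseteq\overline{m(C(K))}^{\,\mathrm{WOT}}$ modulo the lattice-homomorphism fact, but not the reverse inclusion.
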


Now we can introduce one of the two main objects of interest in the current paper.

\begin{definition}
\label{d3} Let $X$ be a Banach $C(K)$-module. We say that $X$ is finitely
generated if there are an $n    \in \mathds{N}$ and $x_{1}, \ldots, x_{n}    \in X$
such that the set $\sum\limits_{i=1}^{n} X(x_{i})$ is dense in $X$.
\end{definition}

\begin{definition}
\label{d4} Let $X$ be a Banach space and $\mathcal{B}$ be a Boolean algebra of
projections on $X$. The algebra $\mathcal{B}$ is called Bade complete
(see~\cite[XVII.3.4, p.2197]{DS} and~\cite[V.3, p.315]{Sch}) if

(1) $\mathcal{B}$ is a complete Boolean algebra.

(2) Let $\{\chi_{\gamma }\}_{\gamma   \in \Gamma }$ be an increasing net in
$\mathcal{B}$, $\chi$ be the supremum of this net, and $x   \in X$. Then the net
$\{\chi_{\gamma }x\}$ converges to $\chi x$ in norm in $X$.
\end{definition}

\begin{definition}
\label{d5} Let $\mathcal{B}$ be a Bade complete Boolean algebra of projections
on $X$. $\mathcal{B}$ is said to be of \textit{uniform multiplicity} $n,$ if
there exist a set of nonzero pairwise disjoint idempotents $\{e_{\alpha }\}$ in
$\mathcal{B}$ with $\sup e_{\alpha }=1$ such that for any $e_{\alpha }$ and for
any $e   \in \mathcal{B}$, $e\leq e_{\alpha }$ the $C(K)$-module $eX$ has exactly
$n$ generators.
\end{definition}

We will need the following result of Rall~\cite{Ra1} (for a proof, see Lemma 2
in~\cite{Or1}).

\begin{lemma}
\label{l3} Let $\mathcal{B}$ be of uniform multiplicity one on $X$. Then $X$
may be represented as a Banach lattice with order continuous norm such that
$\mathcal{B}$ is the Boolean algebra of band projections on $X$.
\end{lemma}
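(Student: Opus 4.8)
The plan is to build the lattice structure locally on the pieces cut out by the disjoint family $\{e_{\alpha}\}$ furnished by Definition~\ref{d4}, and then to glue these pieces together using Bade completeness. Fix, for each $\alpha$, a single generator $x_{\alpha}$ of the cyclic $C(K)$-module $e_{\alpha}X$, so that $e_{\alpha}X = X(x_{\alpha})$. By Proposition~\ref{p1} each $X(x_{\alpha})$ is, in the norm inherited from $X$, a Banach lattice with quasi-interior point $x_{\alpha}$; by Proposition~\ref{p2} its center coincides with the weak operator closure of $m(C(K))$, so the band projections of $X(x_{\alpha})$ are idempotents of that center and hence belong to $\mathcal{B}$. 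The two things left to prove are that the norm on each $X(x_{\alpha})$ is order continuous, and that the local structures assemble into a single Banach lattice structure on $X$ whose band projections are exactly the members of $\mathcal{B}$.

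The heart of the argument is order continuity on a single piece $Y := X(x_{\alpha})$, and this is where multiplicity one is decisive. I would verify order continuity through its disjoint-sequence form: every order-bounded disjoint sequence is norm null. Because $Y$ is cyclic over its center, every positive element is a limit of elements $f x_{\alpha}$ with $f \geq 0$, so disjointness of positive elements is recorded, up to approximation, by disjointness of the supports of such multipliers; the corresponding band projections therefore lie in the center and hence in $\mathcal{B}$. Given a disjoint sequence $0 \leq u_n \leq x_{\alpha}$, let $P_n$ be the band projection onto the band generated by $u_n$; these are pairwise disjoint members of $\mathcal{B}$ with a supremum $P$, and condition (2) of Definition~\ref{d4}, applied to the increasing net of finite partial suprema, gives $\sum_{n \leq N} P_n x_{\alpha} \to P x_{\alpha}$ in norm. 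Since $0 \leq u_n = P_n u_n \leq P_n x_{\alpha}$ and the $P_n x_{\alpha}$ are pairwise disjoint positive elements, we get $\|u_n\| \leq \|P_n x_{\alpha}\| \leq \|\sum_{k>n-1} P_k x_{\alpha}\| \to 0$. Reducing a general order-bounded disjoint sequence to one dominated by the quasi-interior point $x_{\alpha}$ then completes the proof that $Y$ has order continuous norm.

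With order continuity of the pieces in hand, I would glue as follows. Define $X_{+} = \{x \in X : e_{\alpha}x \in X(x_{\alpha})_{+} \text{ for all } \alpha\}$; this cone is closed because each $e_{\alpha}$ is continuous and each $X(x_{\alpha})_{+}$ is closed, and it is generating because, for $x \in X$, the disjoint positive parts $(e_{\alpha}x)_{+}$ are summable along the net of finite subsets, by Bade completeness together with the order continuity just established, producing a decomposition $x = x_{+} - x_{-}$ with $x_{\pm} \in X_{+}$. Uniqueness of the lattice structure compatible with the module structure on each cyclic subspace (Remark~\ref{r1}) guarantees that the local lattice operations are mutually consistent, so that $X$ becomes a vector lattice in which each $e_{\alpha}X$ is a band and $e_{\alpha}$ the associated band projection; that the inherited norm is a lattice norm and is order continuous is then transported from the pieces by the same summability device. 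Finally, a band projection $Q$ of this lattice restricts to a band projection of each $X(x_{\alpha})$, hence lies in $\mathcal{B}$ on each piece, and multiplicity one (which forces the center of each piece to be generated cyclically, so that its idempotents exhaust the band projections) together with Bade completeness lets one reassemble these restrictions into the single element $Q \in \mathcal{B}$; thus $\mathcal{B}$ is exactly the Boolean algebra of band projections.

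The main obstacle, I expect, is the order continuity step and, intertwined with it, the proof that $\mathcal{B}$ exhausts the band projections: both rest on the fact that multiplicity one is precisely what allows disjointness in the order to be detected by projections already present in $\mathcal{B}$, so that the purely operator-theoretic hypothesis of Bade completeness can be converted into a genuine lattice order-continuity statement. The remaining gluing verifications---closedness and generation of $X_{+}$, the lattice-norm inequality, and order continuity of the assembled norm---are routine once the summability of disjoint positive families is available, but some care is needed to reduce arbitrary order-bounded sets to ones dominated by the local quasi-interior points $x_{\alpha}$.
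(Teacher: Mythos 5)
First, note that the paper does not actually prove Lemma~\ref{l3}: it is quoted from Rall's thesis \cite{Ra1}, with the proof deferred to \cite[Lemma 2]{Or1}. Your overall architecture (pass to the cyclic pieces $e_\alpha X = X(x_\alpha)$, equip each with the lattice structure of Proposition~\ref{p1}, extract order continuity from Bade completeness, then glue along the disjoint idempotents) is the standard route and consistent with that reference, but the two load-bearing steps have genuine gaps. For order continuity you invoke the criterion ``every order bounded disjoint sequence is norm null,'' but this alone does not characterize order continuity: the relevant equivalence (see \cite[Theorem 2.4.2]{MN}) requires in addition that the lattice be $\sigma$-Dedekind complete, and you never establish (or mention) any Dedekind completeness of $X(x_\alpha)$. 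Worse, the construction of the projections $P_n$ is close to circular. You need that the band generated by an arbitrary $u_n$ with $0\le u_n\le x_\alpha$ is a projection band whose projection lies in $\mathcal{B}$, and you justify this by approximating $u_n$ by elements $fx_\alpha$ and appealing to ``disjointness of the supports of the multipliers.'' This fails on two counts: order-disjoint limits need not admit disjointly supported approximants, and the support of a continuous $f$ on $K$ corresponds to no idempotent of $C(K)$ when $K$ has few clopen sets (e.g.\ $K=[0,1]$). One can instead set $P_n=\inf\{e\in\mathcal{B}:eu_n=u_n\}$, which exists since $\mathcal{B}$ is complete and fixes $u_n$ by Bade completeness; but then the pairwise disjointness of the $P_n$ --- that is, the fact that order-disjointness in $X(x_\alpha)$ is detected by $\mathcal{B}$, which is exactly where the hypotheses must do their work --- is the nontrivial point, and it is nowhere argued.

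The gluing step has a similar difficulty. To show the cone $X_+$ is generating you claim that $\sum_\alpha(e_\alpha x)_+$ is summable ``by Bade completeness together with the order continuity just established.'' The natural Cauchy estimate is $\bigl\|\sum_{\alpha\in F}(e_\alpha x)_+\bigr\|\le\bigl\|\sum_{\alpha\in F}|e_\alpha x|\bigr\|=\bigl\|\sum_{\alpha\in F}e_\alpha x\bigr\|$, but the equality uses the lattice-norm identity $\|\,|z|\,\|=\|z\|$ for the \emph{assembled} order on sums across distinct pieces, which is precisely the structure you are in the middle of constructing; Bade completeness only gives convergence of $\sum_{\alpha\in F}e_\alpha x$ itself. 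Finally, the claim that every band projection of the assembled lattice belongs to $\mathcal{B}$ needs Proposition~\ref{p2} together with Bade's theorem that a Bade complete Boolean algebra contains every idempotent of the weakly closed operator algebra it generates; the parenthetical reason you offer (``multiplicity one forces the center of each piece to be generated cyclically'') is not an argument. In short, the skeleton is the right one, but each of the three essential claims --- order continuity, disjointness detection by $\mathcal{B}$, and the lattice-norm property of the glued cone --- is asserted rather than proved, and the first rests on a criterion that is false as stated.
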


\begin{definition}
\label{d6} A Bade complete Boolean algebra of projections $\mathcal{B}$ on $X$
is said to be of finite multiplicity on $X$ if there exists a collection of
disjoint idempotents $\{e_{\alpha }\}$ in $\mathcal{B}$ such that, for each
$\alpha,$ $e_{\alpha }X$ is $n_{\alpha }$-generated and $\sup e_{\alpha }=1.$
\end{definition}

\begin{remark} \label{r4}
  The collection $\{n_{\alpha }\}$ of positive integers in Definition~\ref{d6} need not be
bounded.
\end{remark}
\begin{theorem} \label{tJ}  (Bade \cite[XVIII.3.8, p. 2267]{DS}). Let $X$ be a Banach $C(K)$-module of finite multiplicity.
Then there exists a sequence of disjoint idempotents $\{e_{n}\}$ in $\mathcal{B}$
such that, for each $n,$ $\mathcal{B}$ is of uniform multiplicity $n$ on
$e_{n}X$ and $\sup e_{n}=1.$ Also the norm closure of the sum of the sequence
of the spaces $\{e_{n}X\}$ is equal to $X.$
\end{theorem}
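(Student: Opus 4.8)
The plan is to prove the result by an exhaustion argument on the complete Boolean algebra $\mathcal{B}$, organised around a multiplicity function. For a nonzero idempotent $e\in\mathcal{B}$ let $m(e)$ denote the least number of generators of the $C(K)$-module $eX$, with $m(e)=\infty$ when $eX$ is not finitely generated. The first thing I would record is monotonicity: if $f\le e$ then $f=ef$ and $fX=f(eX)$, so any generators $x_{1},\dots ,x_{n}$ of $eX$ yield generators $fx_{1},\dots ,fx_{n}$ of $fX$, whence $m(f)\le m(e)$. By Definition~\ref{d6} there is a disjoint family $\{e_{\alpha }\}$ with $\sup e_{\alpha }=1$ and each $e_{\alpha }X$ finitely generated, so $m$ is finite below each $e_{\alpha }$.

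Next I would establish a local uniformization lemma: below every nonzero $e\in\mathcal{B}$ there is a nonzero $f\le e$ of \emph{uniform} multiplicity, meaning $m(g)=m(f)$ for all nonzero $g\le f$. To find $f$, I first produce a nonzero $e'=e\wedge e_{\alpha }$ of finite multiplicity: if $e\wedge e_{\alpha }=0$ for every $\alpha$, then applying Definition~\ref{d4}(2) to the increasing net of finite partial suprema of $\{e_{\alpha }\}$, which converges strongly to $1$, forces $ex=0$ for all $x$, i.e. $e=0$, a contradiction. Since $\{m(g):0\ne g\le e'\}$ is a nonempty set of positive integers it has a least element $k$, attained at some $f\le e'$; monotonicity then gives $m(g)=k$ for every nonzero $g\le f$, so $f$ has uniform multiplicity $k$.

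With this in hand I would, for each $n\in\mathds{N}$, choose by Zorn's lemma a maximal pairwise disjoint family $\mathcal{F}_{n}$ of idempotents of uniform multiplicity $n$ and set $e_{n}=\sup\mathcal{F}_{n}$. The family $\mathcal{F}_{n}$ witnesses, in the module $e_{n}X$, that $\mathcal{B}$ is of uniform multiplicity $n$ on $e_{n}X$ in the sense of Definition~\ref{d5}. Disjointness of the $e_{n}$ for distinct $n$ is purely Boolean: a projection of uniform multiplicity $n$ and one of uniform multiplicity $n'\ne n$ meet in $0$ (a common nonzero subprojection would carry both multiplicities), and taking suprema one member at a time gives $e_{n}\wedge e_{n'}=0$ without invoking complete distributivity. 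To see $\sup_{n}e_{n}=1$, let $e$ be the complement of $\sup_{n}e_{n}$; if $e\ne0$ the uniformization lemma produces a nonzero $f\le e$ of uniform multiplicity $k$, and since $f\le e$ is disjoint from $e_{k}$ the family $\mathcal{F}_{k}\cup\{f\}$ contradicts maximality. Finally, applying Definition~\ref{d4}(2) once more to the increasing net $e_{1}\vee\dots\vee e_{N}$, whose supremum is $1$, gives $\sum_{n\le N}e_{n}x\to x$ for each $x$, so the norm closure of $\sum_{n}e_{n}X$ equals $X$.

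The routine parts are the monotonicity estimate and the Boolean bookkeeping; the step I expect to carry the real weight is the interaction between the order structure of $\mathcal{B}$ and its strong operator topology. Concretely, the two delicate places are the uniformization lemma and the final density claim, both of which must replace the complete distributivity that a general complete Boolean algebra lacks by the strong convergence guaranteed in Definition~\ref{d4}(2). Getting these convergence arguments exactly right, rather than the combinatorial skeleton, is the crux.
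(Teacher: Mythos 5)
Your argument is correct. Note that the paper itself gives no proof of this statement: it is quoted as Bade's theorem and cited to Dunford--Schwartz \cite[XVIII.3.8]{DS}, so there is nothing internal to compare against; your exhaustion argument is essentially the standard one from that source. The three load-bearing steps all check out: monotonicity of the minimal-generator function $m$ under passing to subprojections, the local uniformization lemma (where the only nontrivial input is that $e\wedge e_{\alpha}=0$ for all $\alpha$ forces $e=0$, which you correctly extract from Definition~\ref{d4}(2) applied to the finite partial suprema), and the Boolean bookkeeping for $e_{n}\wedge e_{n'}=0$, which indeed needs only the frame distributive law $a\wedge\sup_{i}b_{i}=\sup_{i}(a\wedge b_{i})$ valid in every complete Boolean algebra, not complete distributivity. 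The one cosmetic point worth a sentence in a written-up version is that some $e_{n}$ may be $0$ (if no subprojection has uniform multiplicity $n$), which is harmless for the conclusion $\sup_{n}e_{n}=1$ and for the density claim.
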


\section{The main Results}

\bigskip
\begin{theorem}
\label{t1} Let $X$ be a finitely generated Banach $C(K)$-module. Then the
following are equivalent:\newline(1) $X^\star$ has the Radon-Nikodym
property.\newline(2) $X$ does not contain any copy of $\ell^1$.
\end{theorem}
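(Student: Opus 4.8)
The plan is to prove the two implications separately: the direction $(1)\Rightarrow(2)$ is soft and valid for arbitrary Banach spaces, while $(2)\Rightarrow(1)$ is the substance and will proceed by induction on the number of generators. For $(1)\Rightarrow(2)$ I would argue by contraposition. If $X$ contained a copy of $\ell^1$, then $X$ would have a separable closed subspace $Y$ isomorphic to $\ell^1$, whose dual is then isomorphic to $\ell^\infty$ and hence nonseparable. By the Stegall--Uhl criterion (Theorem~\ref{tI}), the existence of a separable subspace with nonseparable dual prevents $X^\star$ from having the RNP. This step uses nothing about the module structure.

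For $(2)\Rightarrow(1)$ I would set up an induction on the number $n$ of generators, the inductive statement ranging over \emph{all} finitely generated Banach $C(K)$-modules (over any compact Hausdorff space) with at most $n-1$ generators. In the base case $n=1$ the module is cyclic, $X=X(x_1)$, so by Proposition~\ref{p1} it carries a compatible Banach lattice structure; since it contains no copy of $\ell^1$, Lotz's criterion (Theorem~\ref{tC}) applies verbatim and yields that $X^\star$ has the RNP.

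For the inductive step, let $Z:=X(x_1)$, a closed $C(K)$-submodule which is again a Banach lattice by Proposition~\ref{p1}. The quotient $X/Z$ inherits a $C(K)$-module structure, since $Z$ is invariant, and with $q\colon X\to X/Z$ the quotient map it is densely generated by $q(x_2),\dots,q(x_n)$, hence is $(n-1)$-generated. Two facts then feed the induction. First, $Z$ is a closed subspace of $X$ and so contains no copy of $\ell^1$; by Theorem~\ref{tC} the lattice dual $Z^\star$ has the RNP. Second, containing no copy of $\ell^1$ passes to quotients: given a bounded sequence in $X/Z$, lift it to a bounded sequence in $X$ using the openness of $q$, extract a weakly Cauchy subsequence via Rosenthal's $\ell^1$ theorem, and push it forward; thus $X/Z$ contains no copy of $\ell^1$, and the inductive hypothesis gives that $(X/Z)^\star$ has the RNP. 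Finally I would dualize $0\to Z\to X\to X/Z\to 0$ into
\[
0 \longrightarrow (X/Z)^\star \longrightarrow X^\star \longrightarrow Z^\star \longrightarrow 0,
\]
identifying $(X/Z)^\star$ with the annihilator $Z^\perp$ and $X^\star/Z^\perp$ with $Z^\star$. Both the subspace $(X/Z)^\star$ and the quotient $Z^\star$ have the RNP.

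The decisive ingredient, and the step I expect to be the main obstacle, is that the RNP is a \emph{three-space property}: if a closed subspace and the corresponding quotient of a Banach space both have the RNP, then so does the whole space. This is exactly what closes the displayed exact sequence, and it is the reason the argument survives the passage from the cyclic (lattice) pieces to their non-direct sum $\sum_i X(x_i)$, which need be only dense rather than complemented or even closed. (Equivalently, one could reformulate everything through Asplundness, $X^\star$ having the RNP iff $X$ is Asplund, and invoke the three-space property of Asplund spaces.) One must also check the routine point that $X/Z$ is honestly a finitely generated Banach $C(K)$-module, possibly over the quotient algebra $C(K)/\ker\bar{m}$ of the induced homomorphism as in Remark~\ref{r3}, so that the inductive hypothesis genuinely applies.
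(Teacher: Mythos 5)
Your proposal is correct and follows essentially the same route as the paper: induction on the number of generators, Lotz's criterion (Theorem~\ref{tC}) for the cyclic lattice piece, and the three-space property of the RNP applied to the dual exact sequence. The only cosmetic differences are that you peel off a cyclic submodule and run the induction on the quotient (the paper does the reverse, taking the $r$-generated submodule as the subspace and leaving a cyclic quotient), and that you use Rosenthal's $\ell^1$-theorem to see that not containing $\ell^1$ passes to quotients, where the paper invokes Hagler's theorem.
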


\begin{proof}
The implication $(1)   \Rightarrow   (2)$ follows from Theorem~\ref{tI}. It can also be proved in the following way.

Suppose (1) holds. Then it is well known that $X^\star$ does not contain
any copy of $L^{1}(0,1)$ (~\cite[5.4.4(ii), p.363 and 5.4.8, p.365]{MN} ).
Then by a result of Hagler~\cite{Ha}, we have that $X$ does not contain a copy of
$\ell^1$. Hence (1) implies (2) for all Banach spaces $X$.

Conversely, suppose that (2) holds. We will complete the proof by induction on
the number of generators of $X$. Let $X$ have one generator. That is $X$ is
cyclic. Then it follows from Proposition~\ref{p1} and Theorem~\ref{tC} that $X^\star$ has the RNP.  Now suppose that whenever $X$ has $r$
generators for some fixed $r\geq1$ and satisfies (2), then $X^\star$ has
the RNP. Let $X$ have $r+1$ generators and satisfy (2).
Suppose $\{x_{0},x_{1},\ldots,x_{r}\}$ is a set of generators of $X$. Let
$Y=X(x_{1},x_{2},\ldots,x_{r})$. Then $Y$ is a Banach $C(K)$-module with $r$
generators and as a subspace of $X$, it satisfies (2). Hence, by the induction
hypothesis, $Y^\star$ has the RNP. Now consider the
cyclic space \ $X/Y=X/Y([x_{0}])$ where $[x_{0}]=x_{0}+Y$. We have that
$\left(  X/Y\right)  ^\star = Y^{o}   \subset X^\star$ where $Y^{o}$ is the
polar (annihilator) of $Y$ in $X^\star$. Since $X$ satisfies (2), by
Hagler's Theorem~\cite{Ha}, $X^\star$ does not contain a copy of $L^{1}(0,1)$
and the same is true for $(X/Y)^\star$ as a subspace of $X^\star$. Again by Hagler's
Theorem, this means that $X/Y$ satisfies (2).  Since $X/Y$ is a cyclic Banach
space, it is representable as a Banach lattice. Therefore, by Lotz's Theorem~\ref{tC},
$Y_0 = (X/Y)^\star$ has the RNP. Recall also that $ X^\star/Y^0 = Y^\star$. Since the RNP is a three space property~\cite[6.5.b, p. 202]{CG}, we
have that $X^\star$ has the Radon-Nikodym property.
\end{proof}

It is possible to sharpen the result of Theorem~\ref{t1} if we assume that the cyclic subspaces of $X$
have order continuous norm when represented as Banach lattices. Initially, we need a result which guarantees that
a Banach lattice $X$ which does not contain a copy of $\ell^1$ as a sublattice
does not contain a copy of $\ell^1$ as a subspace. This is not true in general.
For example, $\ell^1$ is not isomorphic to any sublattice of $\ell^{\infty}$,
however any separable Banach space (hence in particular $\ell^1$) may be
embedded isometrically into $\ell^{\infty}$.

To proceed we need to recall the following important result  by Lotz and Rosenthal.

\begin{theorem} \label{tH} (Lotz and Rosenthal~\cite{LR}, see also~\cite[ Theorem 5.2.15, p.
345]{MN})

Let $X$ be a Banach lattice. The following conditions are equivalent.

(1) For each $x \in X_+$ the order interval $[0,x]$ is weakly sequentially precompact.

(2) $X^\star$ does not contain any copy of $L^1[0,1]$ as a sublattice.

\end{theorem}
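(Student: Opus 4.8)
The plan is to reduce the equivalence to a single statement about one weak order unit and then to recognize it, via Theorem~\ref{tC}, as the problem of deciding \emph{which} of the two alternatives in condition~(3) of that theorem occurs. First I would invoke Rosenthal's $\ell^{1}$-theorem to restate condition~(1): a bounded set is weakly sequentially precompact exactly when it contains no sequence equivalent to the $\ell^{1}$-basis, so~(1) is equivalent to the statement that $X$ contains no sequence $(x_{n})$ with $0\le x_{n}\le x$ for some $x\in X_{+}$ that is equivalent to the $\ell^{1}$-basis; call such a sequence an \emph{order-bounded $\ell^{1}$-sequence}. Next I would pass to the principal ideal $I_{x}=\mathrm{cl}\{y:\ |y|\le \lambda x,\ \lambda>0\}$ (closure in $X$), a closed ideal of $X$ with weak order unit $x$. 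Since $I_{x}$ is an ideal, its annihilator $I_{x}^{o}$ is a band in $X^{\star}$ and, $X^{\star}$ being order complete, a projection band; thus $(I_{x})^{\star}$ is realized as the complementary band, so a lattice copy of $L^{1}[0,1]$ inside some $(I_{x})^{\star}$ is automatically a sublattice of $X^{\star}$, and conversely a sublattice $\iota(L^{1})\subset X^{\star}$ lives, through its weak unit $g=\iota(\mathbf 1)$, in the band dual to such an ideal. This reduces Theorem~\ref{tH} to: for a Banach lattice $Y$ with weak order unit $e$, the interval $[0,e]$ fails to be weakly sequentially precompact iff $Y^{\star}$ contains $L^{1}[0,1]$ as a sublattice. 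By Theorem~\ref{tC} the mere existence of a copy of $\ell^{1}$ already forces $Y^{\star}$ to contain $c_{0}$ \emph{or} $L^{1}[0,1]$; the whole point is that \emph{order}-boundedness selects the $L^{1}$-alternative.

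For the implication ``order-bounded $\ell^{1}$-sequence $\Rightarrow$ $Y^{\star}\supset L^{1}$'' I would argue as follows. Let $(y_{n})\subset[0,e]$ satisfy $\|\sum a_{n}y_{n}\|\ge c\sum|a_{n}|$. Using Hahn--Banach on signed combinations $\sum_{n}\varepsilon_{n}y_{n}$, I would produce, for each finite sign pattern $\varepsilon$, a functional that nearly norms it, and then replace it by its modulus to obtain \emph{positive} functionals; order-boundedness guarantees that each such $\psi$ takes values $\langle\psi,y_{n}\rangle$ in $[0,\langle\psi,e\rangle]$, so the finitely additive set functions $A\mapsto \langle\psi,\bigvee_{n\in A}y_{n}\rangle$ (note $\bigvee_{n\in A}y_{n}\le e$) are uniformly bounded positive ``measures'' charging every coordinate. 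The $\ell^{1}$-lower bound forces these measures to be mutually independent in the Rademacher sense, and passing to a weak-$\star$ cluster point manufactures a non-atomic part, i.e.\ a lattice copy of $L^{1}[0,1]$ sitting among the functionals. This is precisely the predual analogue of the classical fact that a non-scattered $C(K)$ has $M(K)=C(K)^{\star}\supset L^{1}[0,1]$, and carrying it out inside $Y^{\star}$ is the technical heart of the argument.

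For the converse I would dualize. Given $\iota\colon L^{1}[0,1]\hookrightarrow Y^{\star}$ with $g=\iota(\mathbf 1)$, the Rademacher functions form an order-bounded (by $\mathbf 1$) sequence in $L^{\infty}=(L^{1})^{\star}$ that is equivalent to the $\ell^{1}$-basis. The adjoint of $\iota$, restricted to $Y\subset Y^{\star\star}$, is a positive operator $V\colon Y\to L^{\infty}$ with weak-$\star$ dense range; using the weak-$\star$ compactness of the order interval $[0,g]\subset Y^{\star}$ together with the hypothesis that $e$ is a weak unit, I would lift the Rademacher system back to a sequence in $[0,e]\subset Y$ that remains equivalent to the $\ell^{1}$-basis, so that $[0,e]$ is not weakly sequentially precompact.

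The step I expect to be the main obstacle is the construction in the second paragraph: extracting a \emph{genuine}, non-collapsed lattice copy of $L^{1}[0,1]$ (rather than merely $\ell^{1}$ or $c_{0}$) from the independence of the sign-functionals, and, dually, guaranteeing in the third paragraph that the order-bounded $\ell^{1}$-sequence can be realized inside $Y$ itself and not only in the bidual $Y^{\star\star}$. These are exactly the places where order-boundedness is indispensable and where Theorem~\ref{tC}, which yields only the $c_{0}$-or-$L^{1}$ dichotomy, does not by itself suffice.
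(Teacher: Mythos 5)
This theorem is quoted in the paper from Lotz--Rosenthal~\cite{LR} (see also~\cite[Theorem 5.2.15]{MN}); the paper supplies no proof of it, so your proposal has to stand on its own. Its skeleton is the right one: Rosenthal's $\ell^1$-theorem correctly converts the failure of (1) into the existence of a sequence in some $[0,x]$ equivalent to the unit vector basis of $\ell^1$, and the two implications must then be handled separately. But the two steps that you yourself label ``the technical heart'' and ``the main obstacle'' are exactly the content of the theorem, and neither is carried out. Worse, the central device of your second paragraph does not work as written: the set functions $A\mapsto\langle\psi,\bigvee_{n\in A}y_n\rangle$ are not finitely additive (and for infinite $A$ the supremum $\bigvee_{n\in A}y_n$ need not even exist unless $X$ is Dedekind complete), so there is no family of measures to which an independence or weak-$\star$ cluster-point argument could be applied. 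The classical proof obtains genuine measures by a different route: one equips the principal ideal $E_x$ with its order-unit norm, making it an AM-space $C(K)$; the inclusion $j\colon E_x\to X$ is a lattice homomorphism, hence $j^\star\colon X^\star\to M(K)$ is interval preserving, so the Hahn--Banach functionals that norm the signed combinations $\sum\varepsilon_n y_n$ are transported to honest Radon measures on $K$, and it is to these that the independent-families machinery is applied to manufacture the lattice copy of $L^1[0,1]$. None of that apparatus appears in your sketch.

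The converse direction has a comparable gap, which you acknowledge but do not close. From a lattice embedding $\iota\colon L^1[0,1]\to X^\star$ the adjoint $\iota^\star\colon X^{\star\star}\to L^\infty[0,1]$ is interval preserving and onto, so one readily finds an order interval of $X^{\star\star}$ that contains an $\ell^1$-sequence; but condition (1) concerns order intervals of $X$, and $[0,F]\cap X$ for $F\in X^{\star\star}_+$ can be trivial, so ``lifting the Rademacher system back to $[0,e]\subset Y$'' is precisely the assertion to be proved rather than a step in its proof. A further unjustified point is the localization in your first paragraph: you assert that a lattice copy of $L^1[0,1]$ in $X^\star$ must sit inside a band of the form $(I_x)^\star$ for some $x\in X_+$, but the band of $X^\star$ generated by $g=\iota(\mathbf 1)$ need not arise this way, so the reduction of condition (2) to principal ideals is itself a claim requiring proof. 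In sum, the proposal is a plausible road map whose two essential steps are missing, and one of them is set up in a form that cannot be completed as stated.
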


The Lotz - Rosenthal theorem leads to the following lemma.

\begin{lemma}
\label{l1} Let $X$ be a Banach lattice with order continuous norm then $X$
contains a copy of $\ell^1$ if and only if $X$ contains a copy of $\ell^1$ as a sublattice.
\end{lemma}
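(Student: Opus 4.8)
The plan is to prove the two directions separately, the forward one being immediate. If $X$ contains a copy of $\ell^1$ as a sublattice then in particular it contains a copy of $\ell^1$ as a closed subspace, so there is nothing to do. Everything is in the converse.

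For the converse I would route the argument through the dual and the two Banach-lattice theorems already recorded. Suppose $X$ has order continuous norm and contains a copy of $\ell^1$. By the equivalence of $(2)$ and $(3)$ in Lotz's Theorem~\ref{tC}, the dual $X^\star$ must then contain a copy of either $c_0$ or $L^1[0,1]$ as a sublattice. On the other hand, order continuity of the norm forces every order interval $[0,x]$ to be weakly compact, hence weakly sequentially precompact; thus condition $(1)$ of the Lotz--Rosenthal Theorem~\ref{tH} holds, and therefore its condition $(2)$ holds as well, i.e. $X^\star$ contains no copy of $L^1[0,1]$ as a sublattice. The only surviving possibility is that $X^\star$ contains a copy of $c_0$ as a sublattice, and it then remains to manufacture from this an $\ell^1$-sublattice \emph{inside} $X$.

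For that final step I would fix a lattice isomorphism of $c_0$ into $X^\star$, take the images $f_n$ of the unit vectors (a disjoint sequence in $X^\star_+$), and normalize so that $\|f_n\|=1$ and $\|\sum_{i=1}^N f_i\|\le C$ for all $N$ (the upper $c_0$-estimate applied to coefficients all equal to $1$, using that a sum of disjoint positive elements is their supremum). Choosing $x_n\in X_+$ with $\|x_n\|\le 1$ and $f_n(x_n)\ge \tfrac12$, I would then \emph{disjointify} the $x_n$. Here order continuity is used decisively: it makes $X$ Dedekind complete and forces every functional in $X^\star$ to be order continuous, so each $f_n$ has a carrier band which, by Dedekind completeness, is a projection band with band projection $P_n$, and the carriers of the mutually disjoint $f_n$ are themselves mutually disjoint bands. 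Setting $z_n=P_n x_n$ then produces a \emph{disjoint} sequence in $X_+$ with $\|z_n\|\le 1$ and $f_n(z_n)=f_n(x_n)\ge\tfrac12$, since $x_n-z_n$ lies in the null ideal of $f_n$. Testing $\sum_i a_i z_i$ against $\phi_N=\sum_{i=1}^N f_i$, whose norm is at most $C$, gives $\|\sum_i a_i z_i\|\ge \tfrac{1}{2C}\sum_i a_i$ for $a_i\ge 0$; disjointness of the $z_i$ upgrades this to the two-sided estimate $\tfrac{1}{2C}\sum_i|a_i|\le \|\sum_i a_i z_i\|\le \sum_i|a_i|$ for all scalars, so $\overline{\mathrm{span}}\{z_n\}$ is a sublattice of $X$ lattice-isomorphic to $\ell^1$.

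The only genuinely delicate point is the disjointification, and I expect it to be the main place to take care: one must verify that order continuity of the norm really delivers (a) Dedekind completeness of $X$, (b) order continuity of \emph{every} element of $X^\star$, and hence (c) that disjoint positive functionals have disjoint carrier projection bands, so that projecting the $x_n$ onto these bands leaves the values $f_n(x_n)$ unchanged while rendering the $z_n$ disjoint. Once these standard structural facts are in hand the norm estimate is routine, and the two directions together yield the stated equivalence.
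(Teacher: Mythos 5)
Your argument is correct and follows essentially the same route as the paper: order continuity gives weakly (sequentially) compact order intervals, Lotz--Rosenthal (Theorem~\ref{tH}) then rules out $L^1[0,1]$ as a sublattice of $X^\star$, and Lotz's Theorem~\ref{tC}, (2)$\Leftrightarrow$(3), closes the loop. The only divergence is that the paper simply cites \cite[Proposition 2.3.12]{MN} for the duality between $\ell^1$-sublattices of $X$ and $c_0$-sublattices of $X^\star$, whereas you reprove the needed direction by hand via carrier-band disjointification --- a construction that is sound (order continuity does yield Dedekind completeness, order continuity of every functional in $X^\star$, and disjoint carriers for disjoint positive functionals), just longer than necessary.
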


\begin{proof}
Suppose $X$ does not contain a copy of $\ell^1$ as a sublattice. Then
$X^\star$ does not contain any copy of $c_{0}$ as a sublattice~\cite
[Proposition 2.3.12, p. 83]{MN}. Also since $X$ has order continuous norm any order
interval in $X$ is weakly compact~\cite[Theorem 2.4.2, p. 86]{MN}. Hence by
the Eberlein - \v{S}mulian theorem any order interval in $X$ is weakly sequentially
compact. Then, by Theorem~\ref{tH} of Lotz and Rosenthal $X^\star$ does not contain any copy of $L^{1}(0,1)$ as a sublattice.
Finally, by the equivalence (2) $\Leftrightarrow$ (3) in Theorem~\ref{tC} by Lotz we conclude that $X$ does not contain any copy of $\ell^1$.
\end{proof}

\begin{theorem} \label{t2}
 Let $K$ be a hyperstonian compact space and let $X$ be a finitely generated Banach
$C(K)$-module such that the algebra $\mathcal{B}$ of the idempotents in $C(K)$, is a Bade
complete Boolean algebra of projections on $X$. Then the following conditions are
equivalent
\begin{enumerate}
  \item $X^\star$ has the Radon-Nikodym property.
  \item $X$ does not contain any copy of $\ell^1$.
  \item No cyclic subspace of $X$ contains a copy of $\ell^1$.
  \item No cyclic subspace of $X$, when represented as a Banach lattice, contains a copy of $\ell^1$ as a sublattice.
\end{enumerate}
\end{theorem}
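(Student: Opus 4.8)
The plan is to reduce the four conditions to the already-established equivalence of Theorem~\ref{t1} and then to fill in the two \emph{local} conditions $(3)$ and $(4)$ using the hyperstonian and Bade-complete hypotheses through order continuity. Since $X$ is in particular a finitely generated Banach $C(K)$-module, Theorem~\ref{t1} gives $(1)\Leftrightarrow(2)$ for free. The implication $(2)\Rightarrow(3)$ is immediate, because a cyclic subspace is a subspace; and $(3)\Rightarrow(4)$ is immediate as well, since a sublattice copy of $\ell^1$ is in particular a subspace copy. Thus the entire content of the theorem is to close the loop by proving $(4)\Rightarrow(2)$, which I would split into $(4)\Rightarrow(3)$ and $(3)\Rightarrow(2)$.

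The first half, $(4)\Rightarrow(3)$, I expect to follow from Lemma~\ref{l1}, \emph{provided} every cyclic subspace of $X$ has order continuous norm in its Banach lattice representation. To see the latter, fix $x\in X$ and note that $X(x)$ is \emph{singly} generated, hence of uniform multiplicity one for the restriction of $\mathcal B$ to $X(x)$: for an idempotent $e\le\chi$ in $\mathcal B$ one has $eX(x)=\overline{C(K)(ex)}=X(ex)$, which is again cyclic. Because $K$ is hyperstonian, $\mathcal B$ is a complete Boolean algebra, and its Bade completeness on $X$ restricts to $X(x)$ (the projections in $\mathcal B$ leave the $C(K)$-submodule $X(x)$ invariant, and both axioms of Definition~\ref{d5} pass to the invariant subspace). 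Lemma~\ref{l3} then represents $X(x)$ as a Banach lattice with order continuous norm in which $\mathcal B$ acts by band projections; by the uniqueness of the compatible lattice structure on a cyclic subspace (Remark~\ref{r1}) this representation coincides with the one furnished by Proposition~\ref{p1}. Consequently Lemma~\ref{l1} applies verbatim to every cyclic subspace and yields $(4)\Leftrightarrow(3)$.

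The second half, $(3)\Rightarrow(2)$, is the crux and the step I expect to be the main obstacle: it asserts that the \emph{local} absence of $\ell^1$ (in every cyclic subspace) forces its \emph{global} absence in $X$. This is a genuine localization statement, and it is exactly here that the mere density of $\sum_i X(x_i)$ in $X$ (Definition~\ref{d3}) becomes delicate. The naive route --- writing $X$ as a quotient of $\bigoplus_i X(x_i)$ through the sum map, lifting an $\ell^1$-sequence and running a Rosenthal $\ell^1$-subsequence argument coordinate by coordinate to trap a copy of $\ell^1$ inside a single summand $X(x_i)$ --- breaks down because the sum map need only have \emph{dense} range, and dense range does not transport the absence of $\ell^1$ (the diagonal map $\ell^2\to\ell^1$, $x\mapsto(d_nx_n)$ with $\sum|d_n|^2<\infty$ and $d_n\neq0$, is bounded with dense range yet carries the $\ell^1$-free space $\ell^2$ onto $\ell^1$). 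To repair this I would first pass, via Theorem~\ref{tJ}, to the finitely many pieces $e_nX$ of uniform multiplicity $n$, and on each such piece use the uniform multiplicity structure together with the Bade completeness to produce, after cutting by a suitable idempotent of $\mathcal B$, an honest internal direct sum of $n$ cyclic submodules on which the sum map \emph{is} open. On such a local direct summand the Rosenthal coordinate argument does apply: a bounded sequence with no weakly Cauchy subsequence must, after extracting successive subsequences that remove the weak-Cauchy defect in all but one coordinate, have some coordinate sequence with no weakly Cauchy subsequence at all, i.e. an $\ell^1$-basis lying in a single cyclic direction --- contradicting $(3)$. Reassembling the pieces across the idempotents, using Bade completeness to control the passage to the supremum, then gives $(3)\Rightarrow(2)$.

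Having $(3)\Rightarrow(2)$, the chain $(4)\Rightarrow(3)\Rightarrow(2)\Rightarrow(1)$ closes with Theorem~\ref{t1} and the equivalence is complete. The part I am least confident survives without additional work is precisely the production of a genuine \emph{local} direct sum on the uniform multiplicity pieces --- that is, the openness of the sum map after cutting by an idempotent --- and this is where I expect the technical weight of the hyperstonian and Bade-complete hypotheses, and presumably the cyclic-decomposition machinery already underlying Theorems~\ref{tD} and~\ref{tE}, to be indispensable.
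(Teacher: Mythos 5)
Your reductions $(1)\Leftrightarrow(2)$ (via Theorem~\ref{t1}), $(2)\Rightarrow(3)\Rightarrow(4)$, and $(4)\Rightarrow(3)$ (order continuity of cyclic subspaces via Bade completeness and Lemma~\ref{l3}, then Lemma~\ref{l1}) are correct and agree with the paper. The problem is the crux, $(3)\Rightarrow(2)$, which you do not actually prove. Your plan hinges on producing, ``after cutting by a suitable idempotent,'' an honest internal direct sum of $n$ cyclic submodules on which the sum map is open, and you rightly flag this as the step you cannot justify. It is indeed the gap: uniform multiplicity $n$ only says that each $eX$ ($e\le e_\alpha$) has $n$ generators, and being $n$-generated is not the same as splitting as a direct sum of $n$ cyclic submodules --- no amount of cutting by idempotents is shown (or known) to force such a splitting, so the Rosenthal coordinate-extraction argument has nothing to run on. The subsequent ``reassembling the pieces across the idempotents'' is a second unaddressed difficulty, since a closed span of infinitely many $\ell^1$-free pieces need not be $\ell^1$-free without a quantitative mechanism (this is exactly the role Lemma~\ref{l5} plays elsewhere in the paper, and you invoke nothing comparable).

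The paper avoids direct sums entirely and instead inducts on the number of generators using quotients. With generators $\{x_0,\dots,x_r\}$ and $Y=X(x_1,\dots,x_r)$, the induction hypothesis gives that $Y$ contains no copy of $\ell^1$; the quotient $X/Y$ is \emph{cyclic}, generated by $[x_0]$, and $\mathcal{B}$ remains Bade complete on it (\cite[Lemma 1]{KO}), so by Lemma~\ref{l3} it is a Banach lattice with order continuous norm. One then checks that condition $(3)$ on $X$ rules out a sublattice copy of $\ell^1$ in $X/Y$ (the argument of \cite[Theorem 1, p.~480]{KO}), upgrades this to no subspace copy by Lemma~\ref{l1}, and closes the induction with the fact that not containing $\ell^1$ is a three-space property \cite[Theorem 3.2.d, p.~96]{CG}. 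That quotient-plus-three-space mechanism is the idea missing from your write-up; without it, or a genuine substitute for your unproved splitting claim, the proof of $(3)\Rightarrow(2)$ is incomplete.
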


\begin{proof}
It is clear that (2)$   \Rightarrow   $(3). Let us show that (3)$   \Rightarrow   $(2). We will use induction on the number of generators of
$X$. When $n=1$, $X$ is cyclic, so (3)$   \Rightarrow   $(2) trivially. Suppose for
some $r\geq1,$ for all $X$ with $r$ generators we have (3)$   \Rightarrow   $(2).
Suppose $X$ has $r+1$ generators $\{x_{0},x_{1},\ldots,x_{r}\}$ and satisfies
(3). Let $Y=X(x_{1},\ldots,x_{r})$. Then $Y$ satisfies (2) by the induction
hypothesis. On the other hand, $X/Y$ $=X/Y([x_{0}])$ where $[x_{0}]=x_{0}+Y.$
Since $\mathcal{B}$ is Bade complete on $X,$ by~\cite[Lemma 1]{KO}, $\mathcal{B}$
is Bade complete on the cyclic space $X/Y$. By Lemma~\ref{l3}, $X/Y$ can be represented as a
Banach lattice with order continuous norm and quasi-interior point $[x_{0}]$
such that the algebra $\mathcal{B}$ is isometrically isomorphic to the algebra of band projections on the Banach
lattice $X/Y$. Now (3) implies that $\ell^1$ is not
contained in $X/Y$ as a sublattice (see the proof of Theorem 1 in~\cite[p. 480]{KO}). Then by
Lemma~\ref{l1}, we have that $X/Y$ does not contain a copy of $l^{1}.$ Since not
containing $\ell^1$ is a three space property~\cite[Theorem 3.2.d, p.96]{CG}, we
have that $X$ does not contain any copy of $\ell^1$.

The implication (3) $\Rightarrow$ (4) is trivial.

Assume (4). Every cyclic subspace of $X$ when represented as a Banach lattice has order continuous norm. Thus the implication (4) $\Rightarrow$ (3) follows from Lemma~\ref{l1}. It completes the proof.

\end{proof}

The purpose of the next two examples is to show that condition $(3)$ in Theorem~\ref{t2} cannot be weakened as follows. Let $x_1, \ldots , x_n$ be some system of generators of $X$. Assume that none of the cyclic subspaces $X(x_i), i= 1, \ldots , n$ contains a copy of $\ell^1$. Then $X^\star$ has RNP. (Compare Examples 1 and 2 on page 482 in~\cite{KO})

\begin{example} \label{e1} Let $X = L^1(0,1) \oplus L^2(0,1)$ considered as a $L^\infty(0,1)$ Banach module. Let $x_1=(\mathbf{0}, \mathbf{1})$ and $x_2= (-\mathbf{1}, \mathbf{1})$. Then $\{x_1, x_2\}$ is a system of generators of $X$. Clearly $X(x_1) = L^2$ and $X(x_2)$ is isomorphic to $L^2$ while $X^\star$ does not have RNP.
\end{example}

In Example~\ref{e1}, $X$ is a non-atomic Banach lattice. It is easy to provide a similar example when $X$ is a discrete BL.

\begin{example} \label{e2} Let $w_n$ be an increasing sequence of positive real numbers such that $\sum \limits_{n=1}^\infty \frac{1}{w_n}<\infty$. We consider the Hilbert space
$$ \ell^2(w_n) = \{\{a_n\}: \sum \limits_{n=1}^\infty |a_n|^2w_n^2 < \infty \}. $$
with the norm $\|\{a_n\}\| = \sqrt{\sum \limits_{n=1}^\infty |a_n|^2w_n^2}$. Then $\ell^2(w_n) \subset \ell^1$. Let $X= \ell^1 \oplus \ell^2(w_n)$ considered as a $\ell^\infty$-module. Let $x_1 = (\mathbf{0}, \{\frac{1}{n w_n}\})$ and $x_2 = (\{-\frac{1}{n w_n}\}, \{\frac{1}{n w_n}\})$. Then, as in Example~\ref{e1} we see that the system $\{x_1, x_2\}$ generates $X$ and that the cyclic subspaces $X(x_1)$ and $X(x_2)$ are reflexive. However $X^\star$ does not have RNP.
\end{example}

\begin{remark} \label{r6} Already mentioned example of James in~\cite{Ja1} shows that, in general, the condition that $X$ is a finitely generated $C(K)$-module cannot be dropped. Indeed, every Banach space can be considered as a Banach module over $\mathds{C}$. Still there is a large class of Banach $C(K)$-modules which are in general not finitely generated but for which the conclusion of Theorem~\ref{t2} remains valid.
\end{remark}
We proceed now to extend the result of Theorem~\ref{t2} to Banach $C(K)$-modules of finite multiplicity.

\begin{lemma} \label{l4} Let $X$ be a Banach $C(K)$-module of uniform multiplicity $n$ and let $x \in X$. Let $\{e_\alpha\}$ be the system of pairwise disjoint idempotents from Definition~\ref{d5}. Then the set $\{\alpha : e_\alpha x \neq 0\}$ is at most countable.

\end{lemma}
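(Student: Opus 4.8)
The plan is to exploit Bade completeness together with the pairwise disjointness of the idempotents $\{e_\alpha\}$ to show that, for each fixed $k\in\mathds{N}$, only finitely many indices $\alpha$ can satisfy $\|e_\alpha x\|\ge 1/k$; the desired set is then the countable union over $k$ of these finite sets.

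First I would organize the finite partial sums. For a finite subset $F$ of the index set put $\chi_F=\sup_{\alpha\in F}e_\alpha$. Since the $e_\alpha$ are pairwise disjoint idempotents in the Boolean algebra $\mathcal{B}$, the supremum of finitely many of them equals their sum, so $\chi_F=\sum_{\alpha\in F}e_\alpha$ and hence $\chi_F x=\sum_{\alpha\in F}e_\alpha x$. The net $\{\chi_F\}$, ordered by inclusion of the finite sets $F$, is increasing with supremum $\sup_\alpha e_\alpha=1$. Therefore, by condition (2) in the definition of Bade completeness (Definition~\ref{d4}), the net $\{\chi_F x\}$ converges to $x$ in the norm of $X$.

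The key step is the following estimate. Let $M=\sup\{\|e\|:e\in\mathcal{B}\}$, which is finite because a Bade complete Boolean algebra of projections is bounded (indeed, under the normalization of Remark~\ref{r3} one may take $M=1$). Given $\varepsilon>0$, the norm convergence above yields a finite set $F_0$ with $\|x-\chi_{F_0}x\|<\varepsilon$. Now fix any index $\alpha\notin F_0$. Disjointness gives $e_\alpha e_\beta=0$ for every $\beta\in F_0$, so $e_\alpha\chi_{F_0}=0$ and therefore $e_\alpha x=e_\alpha(x-\chi_{F_0}x)$. Consequently $\|e_\alpha x\|\le M\|x-\chi_{F_0}x\|<M\varepsilon$. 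In other words, every index with $\|e_\alpha x\|\ge M\varepsilon$ must lie in the finite set $F_0$.

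To finish, for each $k\in\mathds{N}$ I would apply the estimate with $\varepsilon=1/(Mk)$ to produce a finite set $F_k$ containing $\{\alpha:\|e_\alpha x\|\ge 1/k\}$. Since $\{\alpha:e_\alpha x\neq 0\}=\bigcup_{k=1}^\infty\{\alpha:\|e_\alpha x\|\ge 1/k\}$ is a countable union of finite sets, it is at most countable. I do not expect a genuine obstacle here; the only point requiring care is the identity $e_\alpha\chi_{F_0}=0$, which rests on the pairwise disjointness of the idempotents and on the fact that finite suprema of disjoint projections coincide with their sums. Note that the hypothesis of uniform multiplicity $n$ plays no role beyond supplying, via Definition~\ref{d5}, a pairwise disjoint family with supremum $1$.
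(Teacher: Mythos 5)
Your proof is correct, but it takes a genuinely different route from the paper's. The paper passes to the cyclic subspace $X(x)$, represents it via Lemma~\ref{l3} as a Banach lattice with order continuous norm, observes that the nonzero $e_\alpha x$ form a pairwise disjoint, order bounded family, and derives a contradiction from the existence of an infinite disjoint sequence bounded below in norm (via an embedding of $\ell^\infty$ as a sublattice and Lozanovsky's theorem that such a lattice cannot have order continuous norm). You instead work directly with Bade completeness of $\mathcal{B}$ on $X$: the increasing net of finite partial sums $\chi_F=\sum_{\alpha\in F}e_\alpha$ has supremum $1$, so $\chi_F x\to x$ in norm, and the uniform bound on $\|e_\alpha\|$ together with $e_\alpha\chi_{F_0}=0$ for $\alpha\notin F_0$ forces $\{\alpha:\|e_\alpha x\|\ge 1/k\}$ to be finite for each $k$. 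Both arguments are sound, and both use only the Bade completeness underlying Definition~\ref{d5} rather than the uniform multiplicity itself. Your version is more elementary and self-contained --- it avoids the lattice representation and the two external results from Wnuk and Lozanovsky, and it actually yields slightly more (the family $\{\|e_\alpha x\|\}$ is a null family, i.e.\ $\sum_{\alpha\in F}e_\alpha x\to x$ unconditionally) --- while the paper's route fits naturally into its framework, where the order-continuous lattice representation of cyclic subspaces is the recurring tool. The one point worth making explicit in your write-up is that the boundedness of $\mathcal{B}$ is immediate here from the normalization of Remark~\ref{r3} (the homomorphism $m$ is contractive, so every idempotent has norm at most $1$), so no appeal to the general boundedness theorem for complete Boolean algebras of projections is needed.
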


\begin{proof} Consider the cyclic subspace $Y$ of $X$ generated by $x$. By Lemma~\ref{l3} $Y$ can be represented as a Banach lattice with order continuous norm. Assume, contrary to our claim, that the set $\{\alpha : e_\alpha x \neq 0\}$ is uncountable. Then for some $k \in \mathds{N}$ there are $e_1, e_2, \ldots \in \{e_\alpha\}$ such that $\|e_n x\| \geq 1/k, n \in \mathds{N}$. Notice that the elements $e_n x$ are pairwise disjoint in the Banach lattice $Y$. Then, by Proposition 0.5.5 in~\cite[ page 36]{Wn} $Y$ contains $\ell^\infty$ as a sublattice. But then by the well known result of Lozanovsky~\cite{Loz2} (see also~\cite[Theorem 4, page 295]{KA}) $Y$ cannot have order continuous norm, a contradiction.

\end{proof}

\begin{corollary} \label{c1} Let $X$ be a Banach $C(K)$-module of uniform multiplicity $n$ and let $Y$ be a separable closed subspace of $X$. Then the set $\{\alpha : e_\alpha Y \neq 0\}$ is at most countable.

\end{corollary}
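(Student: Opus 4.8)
The plan is to reduce the statement about the separable subspace $Y$ to the single-generator statement already established in Lemma~\ref{l4}, using separability to produce a countable dense set and the boundedness of the projections to pass from that set to all of $Y$.

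First I would fix a countable dense subset $\{x_m : m \in \mathds{N}\}$ of $Y$, which exists precisely because $Y$ is separable. For each fixed $m$, the element $x_m$ lies in $X$, so Lemma~\ref{l4} applies and guarantees that the set
$$ A_m = \{\alpha : e_\alpha x_m \neq 0\} $$
is at most countable. I would then set $A = \bigcup_{m \in \mathds{N}} A_m$; being a countable union of at most countable sets, $A$ is itself at most countable.

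The key step is to show that $\{\alpha : e_\alpha Y \neq 0\} \subseteq A$, after which the conclusion is immediate. Suppose $\alpha \notin A$. Then $e_\alpha x_m = 0$ for every $m$. Now let $y \in Y$ be arbitrary and choose a subsequence $x_{m_k} \to y$ in norm. Since each $e_\alpha$ is an idempotent in $\mathcal{B}$ and hence a bounded linear operator on $X$, it is norm-continuous, so $e_\alpha y = \lim_k e_\alpha x_{m_k} = 0$. As $y$ was arbitrary, $e_\alpha Y = \{0\}$, i.e. $\alpha \notin \{\alpha : e_\alpha Y \neq 0\}$. This establishes the desired inclusion, and therefore $\{\alpha : e_\alpha Y \neq 0\}$ is at most countable.

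The only point requiring any care is the continuity argument in the last step, which lets us upgrade the vanishing of $e_\alpha$ on the dense set $\{x_m\}$ to vanishing on all of $Y$; everything else is bookkeeping with countable unions. I do not expect a genuine obstacle here, since the projections $e_\alpha$ are uniformly bounded operators and separability does all the real work.
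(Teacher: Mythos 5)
Your proof is correct and is exactly the intended deduction: the paper states this as an immediate corollary of Lemma~\ref{l4} without writing out a proof, and your argument (countable dense subset, countable union of the exceptional sets from Lemma~\ref{l4}, then norm-continuity of the bounded projections $e_\alpha$ to pass from the dense set to all of $Y$) is the standard way to fill it in. No issues.
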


The proof of the following lemma repeats verbatim the proof of statement $(\star)$ on page 485 in~\cite{KO}).

\begin{lemma} \label{l5} Let $X$ be a Banach $C(K)$-module of finite multiplicity such that any cyclic subspace of $X$ does not contain a copy of $\ell^1$.  Suppose that $\{e_n\}$ is a system of pairwise idempotents in $C(K)$ such that $\sup \limits_n e_n = 1$. Let $\chi_n = e_1 + \ldots + e_n$. Then for any $f \in X^\star$ we have $\|f - \chi_n^\star f\| \rightarrow 0$.

\end{lemma}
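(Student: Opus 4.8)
The plan is to argue by contradiction, manufacturing a copy of $\ell^1$ inside a \emph{single} cyclic subspace of $X$, which is forbidden by hypothesis. Write $\delta_n = 1 - \chi_n$, so that $\{\delta_n\}$ is a decreasing sequence of idempotents in $C(K)$. Since $X$ is of finite multiplicity, the Boolean algebra generated by the idempotents of $C(K)$ is Bade complete on $X$ (Definition~\ref{d4}); as $\sup_n \chi_n = \sup_n e_n = 1$, the increasing sequence $\{\chi_n\}$ satisfies $\chi_n x \to x$ in norm for every $x \in X$, equivalently $\delta_n x \to 0$. Consequently $\delta_n^\star f \to 0$ in the weak$^\star$ topology for each $f \in X^\star$, and the point of the lemma is to upgrade this to norm convergence $\|\delta_n^\star f\| = \|f - \chi_n^\star f\| \to 0$. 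Assume this fails for some (necessarily nonzero) $f$. Then there are $\varepsilon > 0$ and an infinite set of indices along which $\|\delta_n^\star f\| \ge \varepsilon$, and for each such $n$ we may choose $x_n \in X$, $\|x_n\| \le 1$, with $|f(\delta_n x_n)| \ge \varepsilon/2$. Put $u_n = \delta_n x_n$; since $m$ is contractive (Remark~\ref{r3}) we have $\|u_n\| \le 1$, while $\delta_n u_n = u_n$ and $|f(u_n)| \ge \varepsilon/2$.

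The next step is a gliding hump replacing the nested tails $u_n$ by disjointly supported elements. For fixed $n$ the relation $\chi_m u_n \to u_n$ gives $\|\delta_m u_n\| \to 0$ as $m \to \infty$, so we inductively select indices $n_1 < m_1 \le n_2 < m_2 \le \cdots$ (with each $n_k$ in our index set) such that $\|\delta_{m_k} u_{n_k}\| < \eta_k$, where $\eta_k > 0$ is chosen so small that $\|f\|\,\eta_k < \varepsilon/4$. Setting $p_k = \chi_{m_k} - \chi_{n_k}$ yields pairwise disjoint idempotents in $C(K)$, and a direct computation using $u_{n_k} = \delta_{n_k} u_{n_k}$ and $\chi_{m_k}\delta_{n_k} = p_k$ shows $u_{n_k} - p_k u_{n_k} = \delta_{m_k} u_{n_k}$. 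Hence the elements $w_k = p_k u_{n_k}$ satisfy $p_k w_k = w_k$, $p_j w_k = 0$ for $j \ne k$, $\|w_k\| \le 2$, and $|f(w_k)| \ge \varepsilon/2 - \|f\|\eta_k \ge \varepsilon/4 > 0$.

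Now I place all the $w_k$ inside one cyclic subspace. Choose $\gamma_k > 0$ with $\sum_k \gamma_k\|w_k\| < \infty$ and set $z = \sum_k \gamma_k w_k \in X$. By Proposition~\ref{p1} the cyclic subspace $X(z)$ is a Banach lattice, and by Proposition~\ref{p2} each idempotent $p_k \in C(K)$ acts on $X(z)$ as a band projection; since the $p_k$ are pairwise disjoint, the elements $p_k z = \gamma_k w_k$ form a disjoint sequence in $X(z)$, and in particular every $w_k = \gamma_k^{-1} p_k z$ lies in $X(z)$. The crucial point is that for a disjoint sequence in a Banach lattice one has $|\sum_k a_k w_k| = \sum_k |a_k|\,|w_k|$, so that $\|\sum_k a_k w_k\|$ depends only on $(|a_k|)$; this lets the one-sided estimate coming from $f$ be aligned and then transferred to all coefficients. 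Concretely, choosing unimodular $\mu_k$ with $\mu_k f(w_k) = |f(w_k)|$,
\[
\frac{\varepsilon}{4\|f\|}\sum_k |a_k| \le \frac{1}{\|f\|}\Big|f\Big(\sum_k |a_k|\mu_k w_k\Big)\Big| \le \Big\|\sum_k |a_k|\mu_k w_k\Big\| = \Big\|\sum_k a_k w_k\Big\| \le 2\sum_k|a_k|.
\]
Thus $\{w_k\}$ is equivalent to the unit vector basis of $\ell^1$, so its closed span is a copy of $\ell^1$ inside the cyclic subspace $X(z)$, contradicting the hypothesis; therefore $\|f - \chi_n^\star f\| \to 0$.

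I expect the main obstacle to be exactly the passage from the merely weak$^\star$-null sequence $\{\delta_n^\star f\}$ to a genuine $\ell^1$ structure: the gliding-hump disjointification must be arranged so that the resulting $w_k$ remain simultaneously bounded below by $f$ and embeddable in a single cyclic subspace $X(z)$, for it is only in that one Banach lattice that the identity $|\sum_k a_k w_k| = \sum_k |a_k|\,|w_k|$ becomes available and upgrades the one-sided functional estimate into a two-sided $\ell^1$ equivalence. This is the same mechanism used for statement $(\star)$ on page~485 of~\cite{KO}.
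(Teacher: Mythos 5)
Your argument is correct, and it is essentially the argument the paper intends: the paper gives no proof of its own but defers to statement $(\star)$ on page 485 of~\cite{KO}, whose mechanism is exactly your gliding hump — disjointify the tails $\delta_n x_n$ into blocks $p_k u_{n_k}$ kept bounded below by $f$, sum them into a single generator $z$ so that the blocks become a disjoint sequence in the Banach lattice $X(z)$, and read off an $\ell^1$-basis contradiction. All the individual steps (Bade completeness giving $\chi_n x \to x$, the identity $u_{n_k} - p_k u_{n_k} = \delta_{m_k} u_{n_k}$, the $p_k$ acting as disjoint band projections on $X(z)$ via Propositions~\ref{p1} and~\ref{p2}, and the modulus computation upgrading the one-sided estimate) check out.
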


\begin{lemma} \label{l2} Let $X$ be a Banach $C(K)$-module of uniform multiplicity $n$. Assume that no cyclic subspace of $X$, represented as a Banach lattice, contains $\ell^1$ as a sublattice. Then $X^\star$ has RNP.

\end{lemma}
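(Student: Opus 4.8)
The plan is to verify condition (2) of the Stegall--Uhl theorem (Theorem~\ref{tI}) for $X$, namely that every separable subspace $Y\subseteq X$ has separable dual $Y^\star$; the RNP of $X^\star$ will then follow from the implication (2)$\Rightarrow$(1). So fix a separable subspace $Y$ of $X$. The first step is to cut the possibly uncountable family of idempotents down to a countable one adapted to $Y$. Let $\{e_\alpha\}$ be the system of pairwise disjoint idempotents from Definition~\ref{d5}. By Corollary~\ref{c1} the set $A_0=\{\alpha:e_\alpha Y\neq 0\}$ is at most countable; I would enumerate the corresponding idempotents as $f_1,f_2,\dots$ and set $e=\sup_k f_k$ and $\chi_n=f_1+\dots+f_n$. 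Since $e_\alpha y=0$ for every $y\in Y$ and every $\alpha\notin A_0$, Bade completeness (Definition~\ref{d4}(2)) forces the net of partial sums $\sum_{\alpha\in F}e_\alpha y$ to converge both to $y$ (as $\sup_\alpha e_\alpha=1$) and to $ey$ (as $\sup_k f_k=e$); hence $Y\subseteq eX$ and, in particular, $\chi_n y\to y$ in norm for each $y\in Y$.

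Next I would record two consequences on $eX$. The subspace $eX$ is again a Banach $C(K)$-module of (uniform, hence finite) multiplicity, and its cyclic subspaces are cyclic subspaces of $X$, so by hypothesis none of them contains $\ell^1$ as a sublattice; since each such cyclic subspace has order continuous norm (Lemma~\ref{l3}), Lemma~\ref{l1} upgrades this to the statement that no cyclic subspace of $eX$ contains a copy of $\ell^1$. Because $\{f_k\}$ is a countable pairwise disjoint family with supremum equal to the identity of $eX$, Lemma~\ref{l5} applies to $eX$ and yields $\|g-\chi_n^\star g\|\to 0$ for every $g\in(eX)^\star$.

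The third step handles the finite truncations. Each $\chi_n X=f_1X\oplus\dots\oplus f_nX$ is a finitely generated module whose cyclic subspaces are again cyclic in $X$, so condition (4) of Theorem~\ref{t2} holds for it; Theorem~\ref{t2} then gives that $(\chi_n X)^\star$ has the RNP, and the implication (1)$\Rightarrow$(2) of Theorem~\ref{tI} shows that the dual $Y_n^\star$ of the separable space $Y_n:=\chi_n Y\subseteq\chi_n X$ is separable. Now I would assemble the pieces. Given $\phi\in Y^\star$, extend it by Hahn--Banach to $\Phi\in(eX)^\star$. For $y\in Y$ one has $(\chi_n^\star\Phi)(y)=\Phi(\chi_n y)=(\Phi|_{Y_n})(\chi_n y)$, so the restriction $(\chi_n^\star\Phi)|_Y$ lies in the subspace $W_n:=(\chi_n|_Y)^\star\big(Y_n^\star\big)\subseteq Y^\star$, which is separable because $Y_n^\star$ is. Moreover $\big\|(\chi_n^\star\Phi)|_Y-\phi\big\|\le\|\chi_n^\star\Phi-\Phi\|\to 0$, whence $\phi\in\overline{\bigcup_n W_n}$. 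Thus $Y^\star=\overline{\bigcup_n W_n}$ is separable, and Theorem~\ref{tI} yields that $X^\star$ has the RNP.

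I expect the main obstacle to lie in the structural bookkeeping of the middle step rather than in the functional-analytic skeleton above: one must check that the Bade complete Boolean algebra underlying uniform multiplicity $n$ is correctly inherited by the reducing subspaces $eX$ and $\chi_n X$, so that $eX$ still satisfies the hypotheses of Lemma~\ref{l5} and each $\chi_n X$ genuinely falls under Theorem~\ref{t2}. The remaining delicate point is the passage from ``no copy of $\ell^1$ as a sublattice'' to ``no copy of $\ell^1$ as a subspace'' for the cyclic subspaces, which is exactly what the order continuity of their norms (Lemma~\ref{l3}) together with Lemma~\ref{l1} provides.
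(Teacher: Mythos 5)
Your proposal is correct and follows essentially the same route as the paper: reduce via Stegall--Uhl to separability of duals of separable subspaces, cut down to a countable family of idempotents via Corollary~\ref{c1}, apply Theorem~\ref{t2} to the finitely generated pieces to get separability of the duals of the truncated subspaces, and use Lemma~\ref{l5} (with the sublattice hypothesis upgraded through Lemmas~\ref{l3} and~\ref{l1}) to glue the truncations together. The only cosmetic difference is that the paper sandwiches $Y$ inside the separable module $Z=\mathrm{cl}\sum_i e_iY$ and obtains $Y^\star$ as a quotient of the separable space $Z^\star=\mathrm{cl}\sum_i(e_iY)^\star$, whereas you work in $eX$ with a Hahn--Banach extension and approximate functionals on $Y$ directly; both arguments are sound.
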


\begin{proof} By Theorem~\ref{tI} it is enough to prove that for any closed separable subspace $Y$ of $X$ the conjugate $Y^\star$ is also separable. Thus, let $Y$ be a closed separable subspace of $X$. By Corollary~\ref{c1} there is an at most countable subset $\{e_i, i \in \mathds{N}\}$ of $\{e_\alpha\}$ such that $e_i Y \neq 0, i \in \mathds{N}$ and $e_\alpha Y  =0$ for any $\alpha$ not in this subset. Then $Y \subseteq Z \subseteq X$ where $Z$ is the closure in $X$ of the direct sum $\sum \limits_{i} e_i Y$ (we remind the reader that the idempotents $e_i$ are pairwise disjoint in $C(K)$). For every $i$, $e_iX$ is a finitely generated Banach $e_iC(K)$-module (with $n$ generators)  and by Theorem~\ref{t2} its conjugate $(e_i X)^\star$ has RNP. Therefore by Theorem~\ref{tI} the conjugate $(e_iY)^\star$ is separable.

Before we go to next step in the proof let us notice that for any $i \in \mathds{N}$ we have $(e_i Y)^\star = e_i^\star Z^\star$. Indeed, $(e_i Y)^\star = Z^\star /(e_i Y)^0$ and $e_i$ is a projection on $Z$ with $e_i Z = cl(e_i Y)$. Consequently, $e_i^\star$ is a projection on $Z^\star$ and $(1-e_i^\star)Z^\star = (e_iY)^0$. Therefore $(e_i Y)^\star = e_i^\star Z^\star$.
 Because no cyclic subspace of $X$ contains a copy of $\ell^1$, Lemma~\ref{l5} guarantees that $Z^\star = cl \sum \limits_{i}  (e_i Y)^\star$.
 Indeed, let $f \in Z^\star$, and $m \in \mathds{N}$. Then $\chi_m f \in \sum \limits_{i=1}^m (e_iY)^\star$ and $\chi_m f$ converges to $f$ by norm.
  Thus $Z^\star$ is separable. Then $Y^\star$, being a factor of $Z^\star$ by $Y^0$, the annihilator of $Y$ in $Z^\star$, is separable as well.

\end{proof}

\begin{theorem} \label{t3} Let $X$ be a Banach $C(K)$-module of finite multiplicity. Then the following conditions are equivalent.

(1) $X^\star$ has RNP.

(2) $X$ does not contain a copy of $\ell^1$.

(3) Any cyclic subspace of $X$ does not contain a copy of $\ell^1$.

(4) Any cyclic subspace of $X$ represented as a Banach lattice does not contain $\ell^1$ as a sublattice.

\end{theorem}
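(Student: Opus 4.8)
The plan is to prove the cycle $(1)\Rightarrow(2)\Rightarrow(3)\Rightarrow(1)$ together with $(3)\Leftrightarrow(4)$, using Bade's decomposition (Theorem~\ref{tJ}) to reduce the finite multiplicity case to the uniform multiplicity case already settled in Lemma~\ref{l2}. The implication $(1)\Rightarrow(2)$ is a general fact valid for every Banach space: it follows either from Stegall's half of Theorem~\ref{tI}, or from the Hagler argument used in the proof of Theorem~\ref{t1} (if $X^\star$ has the RNP then $X^\star$ contains no copy of $L^1(0,1)$, whence $X$ contains no copy of $\ell^1$). The implications $(2)\Rightarrow(3)$ and $(3)\Rightarrow(4)$ are immediate, a cyclic subspace being a subspace and a sublattice being a subspace. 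For $(4)\Rightarrow(3)$ I would argue exactly as in the closing lines of the proof of Theorem~\ref{t2}: every cyclic subspace of $X$, being one-generated and carrying the restriction of the Bade complete algebra $\mathcal{B}$, has uniform multiplicity one, hence by Lemma~\ref{l3} is representable as a Banach lattice with order continuous norm; Lemma~\ref{l1} then upgrades ``contains no copy of $\ell^1$ as a sublattice'' to ``contains no copy of $\ell^1$ as a subspace.''

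The heart of the matter is $(3)\Rightarrow(1)$. Here I would first apply Bade's Theorem~\ref{tJ} to obtain pairwise disjoint idempotents $\{e_n\}$ in $\mathcal{B}$ with $\sup_n e_n=1$, such that $\mathcal{B}$ is of uniform multiplicity $n$ on $e_nX$ and $X=\operatorname{cl}\sum_n e_nX$. Since every cyclic subspace of $e_nX$ is a cyclic subspace of $X$, condition $(3)$ (equivalently $(4)$) passes to each $e_nX$, so Lemma~\ref{l2} applies and gives that each dual $(e_nX)^\star$ has the RNP. To assemble these into the RNP of $X^\star$ I would invoke the Stegall--Uhl criterion (Theorem~\ref{tI}): it suffices to show that $Y^\star$ is separable for every separable closed subspace $Y\subseteq X$.

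So fix such a $Y$. Each $e_n$ projects $Y$ onto the separable subspace $e_nY$ of $e_nX$, and since $(e_nX)^\star$ has the RNP, Theorem~\ref{tI} gives that $(e_nY)^\star$ is separable. Put $Z=\operatorname{cl}\sum_n e_nY$ and $\chi_m=e_1+\cdots+e_m$. Bade completeness (Definition~\ref{d4}) yields $\chi_m y\to y$ for $y\in Y$, so $Y\subseteq Z$, and each $\chi_m$ restricts to a projection on $Z$. Exactly as in the proof of Lemma~\ref{l2}, the adjoint identity $e_n^\star Z^\star=(e_nY)^\star$ shows that every $e_n^\star Z^\star$ is separable. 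The decisive step is then to prove $\chi_m^\star f\to f$ in norm for every $f\in Z^\star$: once this holds, $Z^\star=\operatorname{cl}\sum_n (e_nY)^\star$ is separable, and $Y^\star$, being a quotient of $Z^\star$ by the annihilator of $Y$, is separable as well. This norm convergence is precisely the content of Lemma~\ref{l5}, which applies to $X$ because $(3)$ holds and $\sup_n e_n=1$; to transfer it to $Z^\star$ I would extend $f$ to $\tilde f\in X^\star$ by Hahn--Banach, apply Lemma~\ref{l5} in $X^\star$ to get $\chi_m^\star\tilde f\to\tilde f$, and restrict back to $Z$, using that the restriction map $X^\star\to Z^\star$ is norm nonincreasing and that $(\chi_m^\star\tilde f)|_Z=\chi_m^\star f$ since $\chi_m Z\subseteq Z$.

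I expect the main obstacle to be exactly this last transfer, because $Z=\operatorname{cl}\sum_n e_nY$ need not be a $C(K)$-submodule of $X$, so Lemma~\ref{l5} cannot be invoked directly on $Z^\star$; passing through the genuine finite multiplicity module $X$ via the Hahn--Banach extension is what makes the norm convergence available. Everything else reduces to bookkeeping with the adjoint projections $e_n^\star$ and $\chi_m^\star$ and to recording which of the four conditions feeds each of Lemmas~\ref{l2} and~\ref{l5}.
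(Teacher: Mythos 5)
Your proposal is correct and follows essentially the same route as the paper: Bade's decomposition (Theorem~\ref{tJ}) reduces to the uniform multiplicity case handled by Lemma~\ref{l2}, with Lemma~\ref{l5} supplying the norm convergence $\chi_m^\star f\to f$ and the Stegall--Uhl criterion (Theorem~\ref{tI}) closing the loop, while $(4)\Leftrightarrow(3)$ comes from Lemma~\ref{l1}. Your explicit Hahn--Banach extension step to transfer Lemma~\ref{l5} from $X^\star$ to $Z^\star$ is a slightly more careful rendering of a point the paper passes over with ``as explained in the proof of Lemma~\ref{l2},'' but it is not a different argument.
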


\begin{proof} The implication $(1)   \Rightarrow   (2)$ has been already established in the proof of Theorem~\ref{t1}.

The implications $(2)   \Rightarrow   (3)   \Rightarrow   (4)$ are trivial.

The implication $(4) \Rightarrow (3)$ follows from Lemma~\ref{l1}.

It remains to prove that $(3)   \Rightarrow   (1)$. Assume (3). Let $Y$ be a closed separable subspace of $X$. Let $e_n$ be the idempotents from Theorem~\ref{tJ} and let $Z= cl \sum \limits_{n}  e_nY$. Then $Z$ is a separable closed subspace of $X$ and, applying Theorem~\ref{tJ} we see that $Y   \subseteq Z$. Because any cyclic subspace of $X$ does not contain a copy of $\ell^1$ by Lemma~\ref{l5}, as explained in the proof of Lemma~\ref{l2}, $Z^\star = cl \sum \limits_{n}  (e_n Y)^\star$.

Next notice that by Lemma~\ref{l2} the space $(e_nX)^\star$ has RNP and by Theorem~\ref{tI} the space $(e_nY)^\star$ is separable. Therefore $Z^\star$ is separable, $Y^\star$ is separable as a factor of $Z^\star$, and $X^\star$ has RNP by Theorem~\ref{tI}.

\end{proof}

\begin{remark} \label{r3} A slight modification of Example 4.2(3) on page 752 in~\cite{KO1} ( where one replaces $\ell^p$ by $c_0$) provides an example of a Banach $C(K)$-module $X$ with the following properties.
\begin{enumerate}
  \item $X$ is of uniform multiplicity $n$, $n > 1$.
  \item $X$ is not separable.
    \item Every cyclic subspace of $X$ is separable and has separable dual. In particular, $X$ cannot be finitely (or even countably) generated.
        \item There are cyclic subspaces of $X$ that are not weakly sequentially complete.
\end{enumerate}
Thus, while Theorem~\ref{t2} cannot be applied, by Theorem~\ref{t3} $X$ has dual RNP.

\end{remark}

\end{document}